\pgfplotsset{compat=1.17}
\newtheorem{theorem}{Theorem}
\newtheorem{lemma}[theorem]{Lemma}
\newtheorem{remark}[theorem]{Remark}
\newtheorem{example}[theorem]{Example}
\newtheorem{proposition}[theorem]{Proposition}
\newtheorem{corollary}[theorem]{Corollary}
\newcommand{\calB}{\mathcal{B}}
\newcommand{\calF}{\mathcal{F}}
\newcommand{\calG}{\mathcal{G}}
\newcommand{\calO}{\mathcal{O}}
\newcommand{\bbC}{\mathbb{C}}
\newcommand{\bbN}{\mathbb{N}}
\newcommand{\bbQ}{\mathbb{Q}}
\newcommand{\bbR}{\mathbb{R}}
\newcommand{\bbZ}{\mathbb{Z}}
\newcommand{\rme}{\mathrm{e}}
\newcommand{\rmi}{\mathrm{i}}
\newcommand{\set}[2]{\left\{ #1 \,\middle|\, #2 \right\}}
\newcommand{\abs}[1]{\left\lvert #1 \right\rvert}
\newcommand{\norm}[1]{\left\lVert #1 \right\rVert}
\DeclareMathOperator{\supp}{supp}
\DeclareMathOperator{\csch}{csch}
\renewcommand{\Re}{\operatorname{Re}}
\renewcommand{\Im}{\operatorname{Im}}
\newcommand{\dd}{\,\mathrm{d}}
\title{Phase retrieval of entire functions and its implications for Gabor phase retrieval}
\author{Matthias Wellershoff\thanks{University of Maryland, Department of Mathematics, 4176 Campus Drive, College Park, MD 20742, USA, \href{mailto:wellersm@umd.edu}{wellersm@umd.edu}}}
\date{\today}
\begin{document}
\maketitle

\begin{abstract}
    We characterise all pairs of finite order entire functions whose magnitudes agree on two arbitrary lines in the complex plane by means of the Hadamard factorisation theorem. Building on this, we also characterise all pairs of second order entire functions whose magnitudes agree on infinitely many equidistant parallel lines. Furthermore, we show that the magnitude of an entire function on three parallel lines, whose distances are rationally independent, uniquely determines the function up to global phase, and that there exists a first order entire function whose magnitude on the lines $\bbR + \tfrac{\rmi}{n} \bbZ$ does \emph{not} uniquely determine it up to global phase, for all positive integers $n$. Our results have direct implications for Gabor phase retrieval.

    \vspace{5pt}
    \noindent
    \textbf{Keywords}~Phase retrieval, finite order entire functions, Hadamard factorisation theorem, Gabor transform, time-frequency analysis

    \vspace{5pt}
    \noindent
    \textbf{Mathematics Subject Classification (2020)}~30D20, 94A12
\end{abstract} 

\section{Introduction}

Let us consider the recovery of entire functions $f$ from partial knowledge of their magnitudes
\begin{equation*}
    \lvert f (z) \rvert, \qquad z \in \Omega \subseteq \bbC.
\end{equation*}
The above problem is called \emph{phase retrieval} of entire functions. Clearly, $f$ cannot be recovered completely since the entire function $\rme^{\rmi \alpha} f$, with $\alpha \in \bbR$, has the same magnitudes. To capture this, we will consider the equivalence relation $\sim$ defined by 
\begin{equation}\label{eq:globalphase}
    f \sim g :\iff \exists \alpha \in \bbR. f = \rme^{\rmi \alpha} g.
\end{equation}
Recovering an equivalence class under $\sim$ is referred to as recovery \emph{up to global phase}.

The interest in the phase retrieval problem of entire functions arises from its direct connection to various other phase retrieval problems encountered in applications. A notable example is \emph{Gabor phase retrieval}, where the objective is to reconstruct a square-integrable function (up to global phase) from the magnitude of its Gabor transform. This particular problem finds applications in audio processing: in particular, the Griffin--Lim algorithm \cite{griffin1984signal}, designed for Gabor phase retrieval, is integrated into PyTorch's torchaudio library and TensorFlow's I/O module.

The main result of this paper is the \emph{characterisation all finite order entire functions which agree on two arbitrary lines} in the complex plane (cf.~Theorems~\ref{thm:irrational_intersection}, \ref{thm:rational_intersection}, and \ref{thm:parallel}), which is proven using the well-known Hadamard factorisation theorem. This characterisation includes a theorem proven in \cite{jaming2014uniqueness} addressing the unique recovery (up to global phase) of finite order entire functions from magnitude information on two intersecting lines.

Additionally, we consider entire functions whose magnitudes agree on three or more parallel lines in the complex plane. This leads us to a new uniqueness result on lines whose distances are rationally independent.

\begin{theorem}
    Let $\ell_1,\ell_2,\ell_3 \subset \bbC$ be three parallel lines. Let $a > 0$ denote the distance between $\ell_1$ and $\ell_2$ and let $b > 0$ denote the distance between $\ell_2$ and $\ell_3$. Assume that $\tfrac{b}{a}$ is irrational and let $f$ as well as $g$ be entire functions. Then, $\lvert f \rvert = \lvert g \rvert$ holds on $\ell_1 \cup \ell_2 \cup \ell_3$ if and only if $f \sim g$.
\end{theorem}

Moreover, our characterisation allows for the construction of entire functions which do not agree up to global phase but whose magnitudes agree on infinitely many parallel lines; such as, 
\begin{equation}\label{eq:counterexamples}
    f(z) = \cosh z + \rmi \sinh z, \qquad g(z) = \cosh z - \rmi \sinh z.
\end{equation}
These functions have matching magnitudes on $\bbR + \tfrac{\pi \rmi}{2} \bbZ$ and share no common zeroes. This construction was used to generate counterexamples to uniqueness for sampled Gabor phase retrieval in \cite{alaifari2021phase} (which was later generalised in \cite{alaifari2024connection,grohs2022foundational}). 

Finally, our characterisation leads to the construction of an entire function $f_0$ for which there exist parallel lines that are arbitrarily close together on which magnitude information of $f_0$ does not suffice for a recovery up to global phase. 

\begin{theorem}
    There exists a sequence of entire functions $(f_n)_{n \in \bbN_0}$ of order one such that $f_0 \not\sim f_n$ and $\lvert f_0 \rvert = \lvert f_n \rvert$ on $\bbR + \tfrac{\rmi}{n} \bbZ$ for all $n \in \bbN$. 
\end{theorem}

\section{Notation, preliminaries and overview of existing literature}

\subsection{Notation}

The space of entire functions is denoted by $\mathcal{O}(\mathbb{C})$. Complex polynomials of degree at most $d \in \mathbb{N}_0$ are denoted by $\mathbb{C}_d[z]$, and the space of all complex polynomials is $\mathbb{C}[z]$. The equivalence relation $\sim$ defined in \eqref{eq:globalphase} is primarily associated with functions in $\mathcal{O}(\mathbb{C})$ in this paper, though occasionally also used for functions in $L^2(\mathbb{R})$.

An entire function $f$ is of \emph{finite order} if it satisfies $f(z) = \mathcal{O}(\exp(\lvert z \rvert^a))$ as $z \to \infty$ for some positive $a > 0$. The infimum of all such $a$ is called the \emph{order} $\rho \geq 0$ of the function $f$. Similarly, an entire function $f$ of order $\rho > 0$ is of \emph{type} $\sigma \geq 0$ if $\sigma$ is the infimum of all positive $k > 0$ such that $f(z) = \mathcal{O}(\exp(k \lvert z \rvert^\rho))$ as $z \to \infty$.

The zeroes of an entire function $f$ are described using the multiset notation $M_f : \bbC \to \bbN_0$, 
\begin{equation*}
    M_f(z) := \begin{cases}
        m & \mbox{if } z \mbox{ is a zero of } f \mbox{ of multiplicity } m, \\
        0 & \mbox{else}.
    \end{cases}
\end{equation*}
The expressions
\begin{equation*}
    E(z;0) = 1 - z, \qquad E(z;p) = (1-z) \exp \Bigg( \sum_{j = 1}^p z^j/j \Bigg),
\end{equation*}
for $p \in \bbN$, are called \emph{primary factors}. If $f$ is a finite order entire function, then there exists an integer $p \in \bbN$ such that (see e.g.~\cite[Subsection~8.23 on p.~250]{titchmarsh1939theory})
\begin{equation*}
    \sum_{a \in \supp(M_f) \setminus \{0\}} \frac{M_f(a)}{\lvert a \rvert^{p+1}} < \infty.
\end{equation*}
If $p$ is the smallest such integer, we call 
\begin{equation*}
    P(z) := z^{M_f(0)} \prod_{a \in \supp(M_f) \setminus \{0\}} E\Big(\frac{z}{a};p\Big)^{M_f(a)}
\end{equation*}
the \emph{canonical product} formed with the zeroes of $f$, and we call $p$ its \emph{genus}. Here and throughout the rest of this paper, $\supp$ denotes the support of a function. Finally, we denote the complex phase or argument by $\arg : \bbC \to (-\pi,\pi]$.

\subsection{Preliminaries}

It is not difficult to see (and widely known) that phase retrieval of entire functions enjoys uniqueness when magnitude information on an open set is available.

\begin{lemma}\label{lem:folklore}
    Let $\Omega \subseteq \bbC$ be an open set, and let $f,g \in \calO(\bbC)$ be entire functions. Then, $\lvert f \rvert = \lvert g \rvert$ on $\Omega$ if and only if $f \sim g$.
\end{lemma}

We include a very short proof for the convenience of the reader.

\begin{proof}[Proof of Lemma~\ref{lem:folklore}]
    The lemma follows from the identity theorem if $g = 0$. So, let us assume that $g \neq 0$ and $\lvert f \rvert = \lvert g \rvert$ on $\Omega$. Let $Z(g) \subset \bbC$ denote the set of zeroes of $g$. By assumption, $\lvert f/g \rvert = 1$ on $\Omega \setminus Z(g)$ such that the open mapping theorem implies that $f/g$ is constant on $\Omega \setminus Z(g)$. Hence, $f/g = \rme^{\rmi \alpha}$ on $\Omega \setminus Z(g)$ for some $\alpha \in \bbR$. Now, the identity theorem implies that $f \sim g$. 
\end{proof}

Therefore, phase retrieval of entire functions is unique when magnitude information on a set \emph{containing an open set} is available. It remains to consider sets which do \emph{not} contain open sets. We start by considering individual straight lines.

Let $f,g \in \calO(\bbC)$ be two non-zero entire functions and split the multiset of zeroes of $f$ into two smaller multisets: the common zeroes of $f$ and $g$, and the zeroes of $f$ that are no zeroes of $g$. The former are described by the multiset $M_\text{c} := \min\{ M_f, M_g \}$ while the latter are described by the multiset $M_\text{d} := \min\{ M_f - M_g,0 \}$.

Using this notation, we present the following theorem from \cite{mcdonald2004phase} which will be applied repeatedly in the rest of this paper. An earlier reference, \cite{walther1963question}, establishes the result for Fourier transforms of functions in $L^2([-1/2,1/2])$. The proof notably also covers the more general case presented below. Additionally, in \cite{akutowicz1957determination}, the theorem is proven using Blaschke-type products instead of canonical products.

\begin{theorem}\label{thm:zeroflipping}
    Let $f,g \in \calO(\bbC) \setminus \{0\}$ be of finite order. Then, $\abs{f} = \abs{g}$ on $\bbR$ if and only if $f$ and $g$ factor as 
    \begin{gather*}
        f(z) = \rme^{Q_f(z)} \cdot z^m \prod_{a \in \operatorname{supp}(M_\text{c}) \setminus \{0\}} E\left( \frac{z}{a} ; p \right)^{M_\text{c}(a)} \prod_{a \in \operatorname{supp}(M_\text{d})} E\left( \frac{z}{a} ; p \right)^{M_\text{d}(a)}, \\
        g(z) = \rme^{Q_g(z)} \cdot z^m \prod_{a \in \operatorname{supp}(M_\text{c}) \setminus \{0\}} E\left( \frac{z}{a} ; p \right)^{M_\text{c}(a)} \prod_{a \in \operatorname{supp}(M_\text{d})} E\left( \frac{z}{\overline a} ; p \right)^{M_\text{d}(a)}.
    \end{gather*}
    Here, $p \in \bbN_0$ is the genus of the canonical product formed with the zeroes of $f$, $m \in \bbN_0$ is the multiplicity of the zero at the origin of $f$, and $Q_f, Q_g \in \bbC[z]$ are polynomials of the same order which have coefficients whose real parts agree.
\end{theorem}

The reader is referred to \cite{mcdonald2004phase,walther1963question} for the proof of the theorem above, which is based on the well-known Hadamard factorisation theorem (cf.~e.g.~\cite[Subsection~8.24 on p.~250]{titchmarsh1939theory}). For the remainder of this paper, the following core idea is especially important: if $\abs{f} = \abs{g}$ on $\bbR$, then
\begin{equation*}
    f(z) \overline{ f (\overline z) } = g(z) \overline{ g (\overline z) }, \qquad z \in \bbC,
\end{equation*}
by the identity theorem of complex analysis. Therefore, the multisets $M_f$ and $M_g$ satisfy
\begin{equation}
    \label{eq:mirror_symmetry}
    M_f(z) + M_f(\overline z) = M_g(z) + M_g(\overline z), \qquad z \in \bbC.
\end{equation}

\begin{remark}\label{rem:symmetries}
    Let us make three further remarks:
    \begin{enumerate}
        \item Equation~\eqref{eq:mirror_symmetry} immediately shows that $\abs{M_f - M_g}$, the symmetric difference of the multisets of the zeroes of $f$ and $g$, is invariant under the action of complex conjugation. 
        \item The theorem generalises to entire functions $f$ and $g$ whose magnitudes agree on a general line $\ell$ in $\bbC$ by composing $f$ and $g$ with a rotation and a translation. In this case, if $\sigma_\ell : \bbC \to \bbC$ denotes the reflection of $\bbC$ along $\ell$, then 
        \begin{equation}
            \label{eq:mirror_symmetry_general}
            M_f(z) + M_f(\sigma_\ell(z)) = M_g(z) + M_g(\sigma_\ell(z)), \qquad z \in \bbC,
        \end{equation}
        and $\abs{M_f - M_g}$ is invariant under the action of $\sigma_\ell$.
        \item In the context of Theorem~\ref{thm:zeroflipping}, we define the function
        \begin{equation*}
            P_\mathrm{c}(z) := z^m \prod_{a \in \operatorname{supp}(M_\text{c}) \setminus \{0\}} E\left( \frac{z}{a} ; p \right)^{M_\text{c}(a)}
        \end{equation*}
        and call it the \emph{canonical product formed with the common zeroes of $f$ and $g$}.
    \end{enumerate}
\end{remark}

\subsection{Background and overview of existing literature}

The results presented in this paper were established to answer open questions in the Gabor phase retrieval community. Following the findings in \cite{alaifari2021uniqueness}, which demonstrated that real-valued bandlimited functions are uniquely determined by samples of their Gabor transform magnitude, there was interest in exploring whether the same holds for general complex-valued (bandlimited) functions. While this question was later resolved in \cite{grohs2023injectivity} and \cite{wellershoff2024injectivity} using different techniques, at the time, there was hope that the connection between the Gabor and Bargmann transforms could be used in this pursuit. Since the range of the Bargmann transform is the Fock space of second order entire functions \cite[Section~3.4 on pp.~53--58]{groechenig2001foundations}, one is naturally led to questions about phase retrieval of entire functions.

The approach adopted here draws inspiration from \cite{mallat2015phase}, where one of the key elements is a result establishing the uniqueness of recovering certain functions holomorphic on the upper half-plane from magnitude measurements on two parallel lines. This result is then applied to show that measurements of the Cauchy wavelet transform on two scales are sufficient for the recovery of the analytic representations\footnote{The \emph{analytic representation} $f_+$ of $f \in L^2(\bbR)$ is defined by $\widehat f_+ := 2\chi_{\bbR_+} \cdot \widehat f$, where $\widehat \cdot$ denotes the Fourier transform and $\chi_{\bbR_+}$ is the indicator function of the positive real numbers $\bbR_+$.} of square-integrable functions (up to global phase). Consequently, the analytic representations of bandlimited functions can be reconstructed from samples of the Cauchy wavelet transform \cite{alaifari2023phase}. These ideas motivated the related problem addressed in this paper: the recovery of second order entire functions from their magnitudes on two parallel lines.

Finally, we note that techniques analogous to those employed in this paper were used in \cite{groechenig2020phase} to address the problem of phase retrieval in shift-invariant spaces with Gaussian generator. Similarly, in \cite{wellershoff2022sampling}, comparable methods were applied to demonstrate the unique recovery (up to global phase) of bandlimited functions from Gabor transform magnitudes sampled at twice the Nyquist rate in two frequency bins.

\section{Characterisation of finite order entire functions with common magnitudes on two lines}

Theorem~\ref{thm:zeroflipping} states that the magnitude of an entire function $f$ on a single line does not uniquely determine $f$ (up to global phase). Two ambiguities arise: first, magnitude information on the real line $\mathbb{R}$ does not suffice to distinguish between functions of the form $\exp(i Q)$, where $Q \in \mathbb{R}[z]$ is a polynomial with real coefficients; secondly, and potentially more importantly, magnitude information on any line is insufficient to uniquely determine zeroes of entire functions.

In this section, we investigate whether magnitude information on \emph{two} lines uniquely determines entire functions (up to global phase). The insights presented in the following can be generalised to cases in which we have magnitude information on an arbitrary number of lines under certain conditions.

\subsection{Intersecting lines}

Let us consider two arbitrary lines in the complex plane. There are two possible scenarios: either the lines intersect at some point, or they are parallel. In this subsection, we focus on intersecting lines, and assume, without loss of generality\footnote{Otherwise, we can translate and rotate the lines.}, that they meet at the origin, one of the lines is $\mathbb{R}$ and the other intersects $\bbR$ at an angle $\theta \in (0,\tfrac{\pi}{2}]$. We further categorise our analysis into two cases: first, when $\theta \in (0,\tfrac{\pi}{2}] \setminus \pi \mathbb{Q}$ (we call such angles \emph{irrational}), and secondly, when $\theta \in (0,\tfrac{\pi}{2}] \cap \pi \mathbb{Q}$ (we call such angles \emph{rational}).

Let us begin by considering a general angle $\theta \in (0, \tfrac{\pi}{2}]$, however. For two non-zero entire functions $f, g \in \mathcal{O}(\mathbb{C})$ with $\lvert f \rvert = \lvert g \rvert$ on $\mathbb{R} \cup \mathrm{e}^{\mathrm{i}\theta} \mathbb{R} \subset \mathbb{C}$, the function $\lvert M_f - M_g \rvert$ is invariant under both the reflection $\sigma_\theta$ along $\mathrm{e}^{\mathrm{i}\theta} \mathbb{R}$ and complex conjugation (which we may denote by $\sigma_0$ since it coincides with reflection along $\bbR$) according to Remark~\ref{rem:symmetries}. Consequently, $\lvert M_f - M_g \rvert$ is invariant under the action of the group $G$ generated by $\sigma_0$ and $\sigma_\theta$. Notably, this group contains the rotation $\rho_{2\theta} = \sigma_0 \sigma_\theta$ by angle $2\theta$.

\subsubsection*{Irrational angles}

If $\theta \in (0,\tfrac{\pi}{2}] \setminus \pi \bbQ$ is an irrational angle, then the invariance of the function $\lvert M_f - M_g \rvert$ under the rotation $\rho_{2\theta}$ allows us to conclude that $M_f = M_g$. Indeed, remember that equation~\eqref{eq:mirror_symmetry} implies that $M_f(0) = M_g(0)$, and suppose by contradiction that there exists a number $a \in \bbC \setminus \{0\}$ such that $\lvert M_f(a) - M_g(a) \rvert > 0$. Then, $\lvert M_f - M_g \rvert$ is non-zero on the countably infinite set $\rho_{2\theta}^{\bbZ} a$ contained in the circle of radius $\lvert a \rvert$ around the origin. Therefore, either $f$ or $g$ must have a non-isolated set of zeroes and cannot be non-zero.

Hence, magnitude information on two lines intersecting at an irrational angle uniquely determines the zeroes of an entire function. One can now show that this enables us to completely recover the entire function up to global phase (which was first proven in \cite{jaming2014uniqueness}).

\begin{theorem}[{\cite[Theorem~3.3 on p.~419]{jaming2014uniqueness}}]\label{thm:irrational_intersection}
    Let $\ell_1, \ell_2 \subset \bbC$ be two lines intersecting at an irrational angle $\theta \in \bbR \setminus \pi \bbQ$, and let $f,g \in \calO(\bbC)$ be two entire functions of finite order. Then, $\lvert f \rvert = \lvert g \rvert$ on $\ell_1 \cup \ell_2$ if and only if $f \sim g$.
\end{theorem}

The proof of the above theorem can be found in \cite{jaming2014uniqueness} which also contains the insight that it actually suffices to assume that $\lvert f \rvert = \lvert g \rvert$ on $u_1 \cup u_2$, where $u_1 \subseteq \ell_1$ and $u_2 \subseteq \ell_2$ are sets of uniqueness for finite order entire functions.

\begin{remark}\label{rem:reflection_principle}
    Theorem~\ref{thm:irrational_intersection} remains true for general entire functions. Indeed, the argument that shows that the zeroes of $f$ and $g$ agree remains true. Then, Weierstrass factorisation implies that $f = \rme^{\alpha} P$ and $g = \rme^{\beta} P$, where $P$ is a product of primary factors and $\alpha,\beta \in \calO(\bbC)$. Since $\lvert f \rvert = \lvert g \rvert$ on $\ell_1 \cup \ell_2$, we have that $u := \Re \alpha$ and $v := \Re \beta$ satisfy $u - v = 0$ on $\ell_1 \cup \ell_2$. Now, $u-v$ is harmonic and the reflection principle of harmonic functions (see e.g.~\cite[Theorem~24 on pp.~172--173]{ahlfors1979complex}) implies that $u-v = 0$ on $\rho_{2 \theta}^\bbZ \ell_1$. Since $\rho_{2 \theta}^\bbZ \ell_1 \subset \bbC \cong \bbR^2$ is dense, it follows that $u - v = 0$ and thus that $\alpha - \beta = \rmi c$ for some $c \in \bbR$. 
\end{remark}

\subsubsection*{Rational angles}

If $\theta \in (0,\tfrac{\pi}{2}] \setminus \pi \bbQ$ is a rational angle, then we cannot conclude that $f$ and $g$ have the same zeroes. Consequently, a finite order entire function is \emph{not} uniquely determined by its magnitude on two lines intersecting at angle $\theta$. A simple example can be found when $\theta = \tfrac{\pi}{2}$: the functions
\begin{equation*}
    f(z) = 1 - \frac{z^2}{(1 + \rmi)^2}, \qquad g(z) = 1 - \frac{z^2}{(1 - \rmi)^2},
\end{equation*}
agree on $\bbR \cup \rmi \bbR$ and have distinct sets of zeroes.

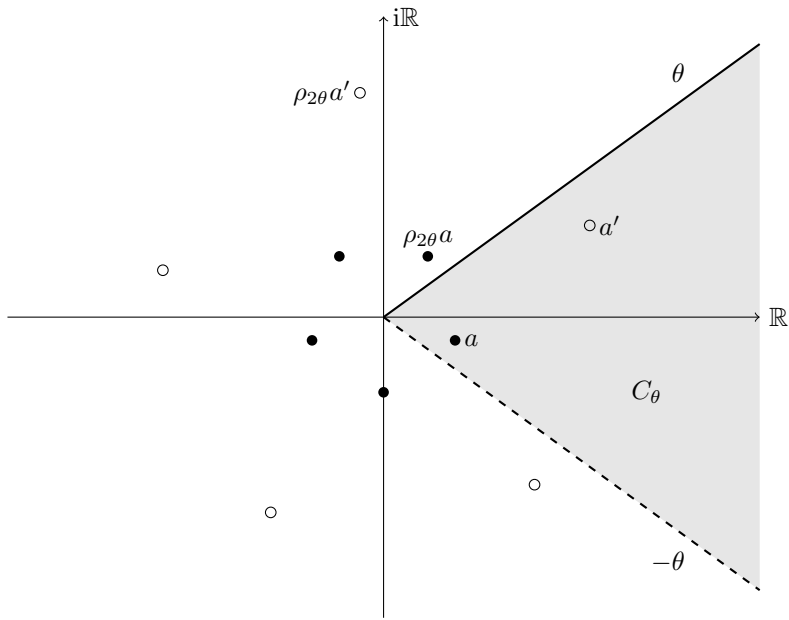
\begin{figure}
    \centering
    \begin{tikzpicture}

        \fill[gray!20] (0,0)--(5,3.633)--(5,-3.633);
        \node at (3.5,-1) {$C_\theta$};

        \draw[->] (-5,0)--(5,0) node[right] {$\bbR$};
        \draw[->] (0,-4)--(0,4) node[right] {$\rmi \bbR$};

        \draw[thick, dashed] (0,0)--(5,-3.633);
        \draw[thick] (0,0)--(5,3.633);
        \node[above left] at (4.129,3) {$\theta$};
        \node[below left] at (4.129,-3) {$-\theta$};

        \fill[black] (0.951,-0.309) circle (2pt) node[right] {$a$};
        \fill[black] (0.588,0.809) circle (2pt) node[above] {$\rho_{2 \theta} a$};
        \fill[black] (-0.588,0.809) circle (2pt);
        \fill[black] (-0.951,-0.309) circle (2pt);
        \fill[black] (0,-1) circle (2pt);

        \draw (2.741,1.22) circle (2pt) node[right] {$a'$};
        \draw (-0.314,2.984) circle (2pt) node[left] {$\rho_{2 \theta} a'$};
        \draw (-2.934,0.624) circle (2pt);
        \draw (-1.5,-2.598) circle (2pt);
        \draw (2.007,-2.229) circle (2pt);

    \end{tikzpicture}
    \caption{The grouping of zeroes in $\supp(M_\mathrm{d})$ using the rotational symmetry with angle $2 \theta$. The dotted line indicates the open boundary of $C_\theta$. Note that, for all $a,a' \in \supp(M_\mathrm{d})$, it holds that $\rho_{2 \theta}^{k} a, \rho_{2 \theta}^{k} a' \in \supp(M_\mathrm{d})$, for $k=1,\dots,n$.}
    \label{fig:intersecting_splitting_zeroes}
\end{figure}

In the following, we classify all finite order entire functions $f,g \in \calO(\bbC)$ whose magnitudes agree on two lines intersecting at a rational angle. At the core of this classification is a simple grouping of the zeroes of $f$. Recall that we denoted the multiset of common zeroes of $f$ and $g$ by $M_\mathrm{c} := \min\{ M_f , M_g \}$ while the multiset of zeroes of $f$ that are no zeroes of $g$ was denoted by $M_\mathrm{d} := \min\{ M_f - M_g , 0 \}$.

We can further split the second multiset because of its rotational symmetry: by equations~\eqref{eq:mirror_symmetry} and \eqref{eq:mirror_symmetry_general}, we have $M_\mathrm{d} = M_\mathrm{d} \circ \rho_{2\theta}^k$ for all $k \in \bbZ$. Consequently, the values of $M_\mathrm{d}$ are completely determined by its values on the (infinitely extended) cone (cf.~Figure~\ref{fig:intersecting_splitting_zeroes})
\begin{equation*}
    C_\theta := \{ z \in \bbC \,|\, \arg z \in (-\theta,\theta] \}.
\end{equation*}
In particular, if $n \in \bbN$ is the smallest integer such that $\theta n \in \pi \bbN$, then
\[
    \supp(M_\mathrm{d}) = \rho_{2 \theta}^{\bbZ/n\bbZ} (\supp(M_\mathrm{d}) \cap C_\theta).
\]
We can thereby establish the following theorem.

\begin{theorem}\label{thm:rational_intersection}
    Let $\theta \in \pi \bbQ \cap (0,\frac{\pi}{2}]$, let $n \in \bbN$ be the smallest integer such that $\theta n \in \pi \bbN$ and let $f,g \in \calO(\bbC) \setminus \{0\}$ be of finite order. Then, $\abs{f} = \abs{g}$ on $\bbR \cup \rme^{\rmi \theta} \bbR$ if and only if $f$ and $g$ factor as
    \begin{gather*}
        f(z) = \rme^{Q_f(z)} P_\mathrm{c}(z) \prod_{\substack{a \in \operatorname{supp}(M_\text{d}) \\ \arg a \in (-\theta,\theta]}} \prod_{k = 1}^n E\left( \frac{z}{\rho_{2 \theta}^k a} ; p \right)^{M_\text{d}(a)}, \\
        g(z) = \rme^{Q_g(z)} P_\mathrm{c}(z) \prod_{\substack{a \in \operatorname{supp}(M_\text{d}) \\ \arg a \in (-\theta,\theta]}} \prod_{k = 1}^n E\left( \frac{z}{\rho_{2 \theta}^k \overline a} ; p \right)^{M_\text{d}(a)}.
    \end{gather*}
    Here, $p \in \bbN_0$ is the genus of the canonical product formed with the zeroes of $f$, $P_\mathrm{c}$ is the canonical product formed with the common zeroes of $f$ and $g$, and $Q_f, Q_g \in \bbC[z]$ are polynomials of the same order $d \in \bbN_0$, whose coefficients have the same real parts and whose imaginary parts $(\lambda_k)_{k = 0}^d, (\mu_k)_{k = 0}^d \in \bbR$ satisfy $\lambda_k = \mu_k$ for $k \not\in n \bbZ$.
\end{theorem}

\begin{proof}
    Let us show that $f$ and $g$ satisfying $\abs{f} = \abs{g}$ on $\bbR \cup \rme^{\rmi \theta} \bbR$ can be factored as described above. We note that Theorem~\ref{thm:zeroflipping} along with the considerations before the statement of this theorem show that 
    \begin{gather*}
        f(z) = \rme^{Q_f(z)} P_\mathrm{c}(z) \prod_{\substack{a \in \operatorname{supp}(M_\text{d}) \\ \arg a \in (-\theta,\theta]}} \prod_{k = 1}^n E\left( \frac{z}{\rho_{2 \theta}^k a} ; p \right)^{M_\text{d}(a)}, \\
        g(z) = \rme^{Q_g(z)} P_\mathrm{c}(z) \prod_{\substack{a \in \operatorname{supp}(M_\text{d}) \\ \arg a \in (-\theta,\theta]}} \prod_{k = 1}^n E\left( \frac{z}{\rho_{2 \theta}^k \overline a} ; p \right)^{M_\text{d}(a)},
    \end{gather*}
    where $p \in \bbN_0$ is the genus of the canonical product formed with the zeroes of $f$, $P_\mathrm{c}$ is the canonical product formed with the common zeroes of $f$ and $g$, and $Q_f, Q_g \in \bbC[z]$ are polynomials of the same order whose coefficients $(\lambda_k)_{k = 0}^d, (\mu_k)_{k = 0}^d \in \bbC$ have the same real parts. Here, we use that $p$ is large enough to make the products converge unconditionally.

    By rearranging the product over $k$ in the above formula for $g$, we can see that 
    \begin{align*}
        \frac{\lvert f(\rme^{\rmi \theta} x) \rvert}{\lvert g(\rme^{\rmi \theta} x) \rvert} &= \rme^{\Re[ Q_f(\rme^{\rmi \theta} x) - Q_g(\rme^{\rmi \theta} x) ]} \prod_{\substack{a \in \operatorname{supp}(M_\text{d}) \\ \arg a \in (-\theta,\theta]}} \prod_{k = 1}^n \Bigg\lvert \frac{E\left( \frac{\rme^{\rmi \theta} x}{\rho_{2 \theta}^k a} ; p \right)}{E\left( \frac{\rme^{\rmi \theta} x}{\rho_{2 \theta}^{1-k} \overline a} ; p \right)} \Bigg\rvert^{M_\text{d}(a)} \\
        &= \rme^{\Re[ Q_f(\rme^{\rmi \theta} x) - Q_g(\rme^{\rmi \theta} x) ]} \prod_{\substack{a \in \operatorname{supp}(M_\text{d}) \\ \arg a \in (-\theta,\theta]}} \prod_{k = 1}^n \Bigg\lvert \frac{E( \rme^{(1-2k)\rmi\theta} a^{-1} x ; p )}{E( \rme^{(2k-1)\rmi\theta} \overline a^{-1} x ; p )} \Bigg\rvert^{M_\text{d}(a)} \\
        &= \rme^{\Re[ Q_f(\rme^{\rmi \theta} x) - Q_g(\rme^{\rmi \theta} x) ]},
    \end{align*}
    where we used $\lvert E(\overline z; p) \rvert = \lvert E(z; p) \rvert$. It follows that 
    \begin{equation*}
        \Im \left[ \sum_{k = 0}^d (\lambda_k - \mu_k) \rme^{\rmi k \theta} x^k \right] = 0
    \end{equation*}
    which immediately implies $(\lambda_k - \mu_k) \sin (k \theta) = 0$ for $k = 0,\dots,d$. Therefore, $\lambda_k = \mu_k$ for $k \not\in n \bbZ$. The reverse implication follows from the same arguments.
\end{proof}

\begin{remark}
    Using rigid motions, we can extend Theorem~\ref{thm:rational_intersection} to arbitrary lines $\ell_1$ and $\ell_2$ intersecting at a rational angle. From there on, we can further extend to sets of uniqueness $u_1 \subseteq \ell_1$ and $u_2 \subseteq \ell_2$ for finite order entire functions.
\end{remark}

\subsection{Parallel lines}

In the following, we will analyse entire functions whose magnitudes agree on two parallel lines in the complex plane. We assume, without loss of generality, that one of these parallel lines coincides with the real numbers while the other is $\mathbb{R} + \mathrm{i} y_0$ where $y_0 > 0$.

Let us start by noting that the magnitude of an entire function, measured on two parallel lines, does \emph{not} uniquely determine the zeroes of the entire function. Consider the functions $f,g \in \calO(\bbC)$ given by equation~\eqref{eq:counterexamples}. Then, it is readily seen that $f$ and $g$ have no common zeroes while satisfying $\lvert f \rvert = \lvert g \rvert$ on $\bbR$. Additionally, we can use trigonometric identities to calculate 
\begin{align*}
    \abs{f\left(x + \frac{\pi \rmi}{2}\right)} &= \abs{\cosh\left( x + \frac{\pi \rmi}{2} \right) + \rmi \sinh\left( x + \frac{\pi \rmi}{2} \right)} \\
        &= \abs{ \cosh x - \rmi \sinh x }
        = \abs{ \cosh x + \rmi \sinh x } \\
        &= \abs{\cosh\left( x + \frac{\pi \rmi}{2} \right) - \rmi \sinh\left( x + \frac{\pi \rmi}{2} \right)}
        = \abs{g\left(x + \frac{\pi \rmi}{2}\right)},
\end{align*}
for $x \in \bbR$. Of course, the above example can be adapted to any two parallel lines $\ell_1, \ell_2 \subset \bbC$ by appropriately translating, rotating and scaling the functions.

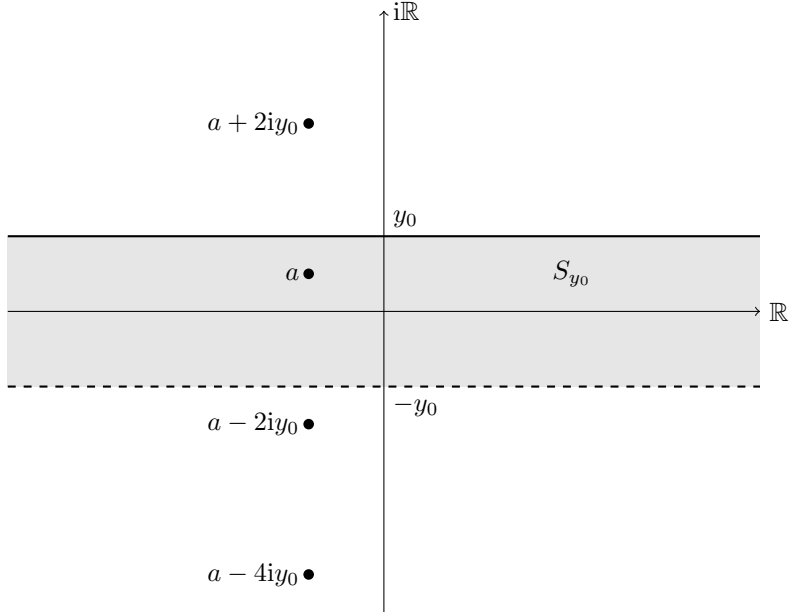
\begin{figure}
    \centering
    \begin{tikzpicture}

        \fill[gray!20] (-5,-1) rectangle (5,1);
        \node at (2.5,0.5) {$S_{y_0}$};

        \draw[->] (-5,0)--(5,0) node[right] {$\bbR$};
        \draw[->] (0,-4)--(0,4) node[right] {$\rmi \bbR$};

        \draw[thick, dashed] (-5,-1)--(5,-1);
        \draw[thick] (-5,1)--(5,1);
        \node[above right] at (0,1) {$y_0$};
        \node[below right] at (0,-1) {$-y_0$};

        \fill[black] (-1,.5) circle (2pt) node[left] {$a$};
        \fill[black] (-1,2.5) circle (2pt) node[left] {$a+2\rmi y_0$};
        \fill[black] (-1,-1.5) circle (2pt) node[left] {$a-2\rmi y_0$};
        \fill[black] (-1,-3.5) circle (2pt) node[left] {$a-4\rmi y_0$};

    \end{tikzpicture}
    \caption{Depiction of $S_{y_0}$. Note that the dashed line indicates that $\bbR - \rmi y_0$ is not contained in $S_{y_0}$.}
    \label{fig:parallel_splitting_zeroes}
\end{figure}

As for lines intersecting at a rational angle, we classify all finite order entire functions $f,g \in \calO(\bbC)$ whose magnitudes agree on two parallel lines: $\lvert f \rvert = \lvert g \rvert$ on $\bbR \cup (\bbR + \rmi y_0)$. Let us continue to consider the splitting of the zeroes of $f$ into those that $f$ shares with $g$ (described by the multiset $M_\mathrm{c}$) and those that it does not (described by the multiset $M_\mathrm{d}$). We can further subdivide the zeroes of $f$ that are no zeroes of $g$ because $M_\mathrm{d}$ is invariant under translations. Indeed, let $\sigma_{y_0}$ denote reflection about $\bbR + \rmi y_0$ and let $\sigma_0$ denote complex conjugation. Then, equations~\eqref{eq:mirror_symmetry} and \eqref{eq:mirror_symmetry_general} together imply that $M_\mathrm{d} = M_\mathrm{d} \circ \tau_{2 y_0}^k$ for all $k \in \bbZ$, where $\tau_{2 y_0} = \sigma_0 \circ \sigma_{y_0}$ denotes translation by $2 \rmi y_0$. It follows that the values of $M_\mathrm{d}$ are completely determined by its values on the (infinite) strip (cf.~Figure~\ref{fig:parallel_splitting_zeroes})
\begin{equation*}
    S_{y_0} := \{ z \in \bbC \,|\, \Im z \in (-y_0,y_0] \}.
\end{equation*}
In particular, we have 
\[
    \supp(M_\mathrm{d}) = \tau_{2 y_0}^{\bbZ} (\supp(M_\mathrm{d}) \cap S_{y_0}).
\]

Interestingly, the above implies that $\supp(M_\mathrm{d})$ contains an arithmetic sequence if it is non-empty. It follows, by an application of Jensen's formula, that $f$ and $g$ must be entire functions of order at least one in this case\footnote{Indeed, suppose that $f$ is an entire function of order $\rho < 1$ and pick $\epsilon > 0$ such that $\rho + \epsilon < 1$. Then, a classical application of Jensen's theorem (cf.~the argument in \cite[p.~249]{titchmarsh1939theory} for instance) implies that the number $n(r)$ of zeroes $a$ of $f$ with $\lvert a \rvert \leq r$ satisfies $n(r) \lesssim r^{\rho + \epsilon}$ where the implicit constant only depends on $\epsilon$. Hence, the set of zeroes of $f$ does \emph{not} contain an arithmetic sequence.}. We now establish the following theorem.

\begin{theorem}\label{thm:parallel}
    Let $y_0 > 0$ and let $f,g \in \calO(\bbC) \setminus \{0\}$ be of finite order. Then, $\abs{f} = \abs{g}$ on $\bbR \cup (\bbR + \rmi y_0)$ if and only if $f$ and $g$ factor as
    \begin{gather*}
        f(z) = \rme^{Q_f(z)} P_\mathrm{c}(z) \prod_{\substack{a \in \operatorname{supp}(M_\text{d}) \\ \Im a \in (-y_0,y_0]}} \prod_{k \in \bbZ} E\left( \frac{z}{\tau_{2 y_0}^k a} ; p \right)^{M_\text{d}(a)}, \\
        g(z) = \rme^{Q_g(z)} P_\mathrm{c}(z) \prod_{\substack{a \in \operatorname{supp}(M_\text{d}) \\ \Im a \in (-y_0,y_0]}} \prod_{k \in \bbZ} E\left( \frac{z}{\tau_{2 y_0}^k \overline a} ; p \right)^{M_\text{d}(a)}.
    \end{gather*}
    Here, $p \in \bbN_0$ is the genus of the canonical product formed with the zeroes of $f$, $P_\mathrm{c}$ is the canonical product formed with the common zeroes of $f$ and $g$, and $Q_f, Q_g \in \bbC[z]$ are polynomials of the same order $d \in \bbN_0$, whose coefficients have the same real parts and whose imaginary parts $(\lambda_\ell)_{\ell = 0}^d, (\mu_\ell)_{\ell = 0}^d \in \bbR$ satisfy
    \begin{multline*}
        \sum_{\ell = 1}^d (\lambda_\ell - \mu_\ell) \Im[ (x + \rmi y_0)^\ell ] = \sum_{\substack{a \in \operatorname{supp}(M_\text{d}) \\ \Im a \in (-y_0,y_0]}} M_\mathrm{d}(a) \sum_{k \in \bbZ} \Bigg( \log \abs{ 1 - \frac{2 \rmi y_0}{\tau_{2y_0}^k a} } \\
        + \sum_{\ell = 1}^p \ell^{-1} \Re \Bigg[ \left( \frac{x + \rmi y_0}{\tau_{2y_0}^k a} \right)^\ell - \left( \frac{x + \rmi y_0}{\tau_{2y_0}^{-k} \overline a + 2 \rmi y_0} \right)^\ell \Bigg] \Bigg),
    \end{multline*}
    for all $x \in \bbR$. 
\end{theorem}

\begin{proof}
    We show that $f$ and $g$ satisfying $\abs{f} = \abs{g}$ on $\bbR \cup (\bbR + \rmi y_0)$ can be factored as described above. As in the proof of Theorem~\ref{thm:rational_intersection}, we have that Theorem~\ref{thm:zeroflipping} along with the considerations above show that 
    \begin{gather*}
        f(z) = \rme^{Q_f(z)} P_\mathrm{c}(z) \prod_{\substack{a \in \operatorname{supp}(M_\text{d}) \\ \Im a \in (-y_0,y_0]}} \prod_{k \in \bbZ} E\left( \frac{z}{\tau_{2 y_0}^k a} ; p \right)^{M_\text{d}(a)}, \\
        g(z) = \rme^{Q_g(z)} P_\mathrm{c}(z) \prod_{\substack{a \in \operatorname{supp}(M_\text{d}) \\ \Im a \in (-y_0,y_0]}} \prod_{k \in \bbZ} E\left( \frac{z}{\tau_{2 y_0}^k \overline a} ; p \right)^{M_\text{d}(a)},
    \end{gather*}
    where $p \in \bbN_0$ is the genus of the canonical product formed with the zeroes of $f$, $P_\mathrm{c}$ is the canonical product formed with the common zeroes of $f$ and $g$, and $Q_f, Q_g \in \bbC[z]$ are polynomials of the same order whose coefficients have the same real parts.

    By rearranging the product over $k$ in the factorisation of $g$, we can see that 
    \begin{align*}
        \MoveEqLeft[3] \frac{\lvert f(x + \rmi y_0) \rvert}{\lvert g(x + \rmi y_0) \rvert} \\
        ={}& \rme^{\Re[ Q_f(x + \rmi y_0) - Q_g(x + \rmi y_0) ]} \prod_{\substack{a \in \operatorname{supp}(M_\text{d}) \\ \Im a \in (-y_0,y_0]}} \prod_{k \in \bbZ} \Bigg\lvert \frac{E\left( \frac{x + \rmi y_0}{\tau_{2 y_0}^k a} ; p \right)}{E\left( \frac{x + \rmi y_0}{\tau_{2 y_0}^{1-k} \overline a} ; p \right)} \Bigg\rvert^{M_\text{d}(a)} \\
        ={}& \rme^{\Re[ Q_f(x + \rmi y_0) - Q_g(x + \rmi y_0) ]} \prod_{\substack{a \in \operatorname{supp}(M_\text{d}) \\ \Im a \in (-y_0,y_0]}} \prod_{k \in \bbZ} \abs{ 1 - \frac{2 \rmi y_0}{\tau_{2y_0}^k a} }^{M_\text{d}(a)} \\
        & \cdot \exp\left( M_\mathrm{d}(a) \sum_{\ell = 1}^p \ell^{-1} \Re \left[ \left( \frac{x + \rmi y_0}{\tau_{2y_0}^k a} \right)^\ell - \left( \frac{x + \rmi y_0}{\tau_{2y_0}^{-k} \overline a + 2\rmi y_0} \right)^\ell \right] \right).
    \end{align*}
    It follows that 
    \begin{multline*}
        \sum_{\ell = 1}^d (\lambda_\ell - \mu_\ell) \Im[ (x + \rmi y_0)^\ell ] = \sum_{\substack{a \in \operatorname{supp}(M_\text{d}) \\ \Im a \in (-y_0,y_0]}} M_\mathrm{d}(a) \sum_{k \in \bbZ} \Bigg( \log \abs{ 1 - \frac{2 \rmi y_0}{\tau_{2y_0}^k a} } \\
        + \sum_{\ell = 1}^p \ell^{-1} \Re \Bigg[ \left( \frac{x + \rmi y_0}{\tau_{2y_0}^k a} \right)^\ell - \left( \frac{x + \rmi y_0}{\tau_{2y_0}^{-k} \overline a + 2 \rmi y_0} \right)^\ell \Bigg] \Bigg).
    \end{multline*}
    The reverse implication follows from the same arguments.
\end{proof}

\begin{remark}
    Let us make three remarks on this theorem:
    \begin{enumerate}
        \item The conditions on the coefficients of the polynomials $Q_f$ and $Q_g$ do not seem immediately useful. However, we will see in the following section that they simplify in some interesting settings.
        \item Using rigid motions, we can extend Theorem~\ref{thm:parallel} to arbitrary parallel lines $\ell_1$ and $\ell_2$. We can also extend it to sets of uniqueness $u_1 \subseteq \ell_1$ and $u_2 \subseteq \ell_2$ for finite order entire functions.
        \item According to Proposition~\ref{prop:expansion}, we can express the products over $k$ in the factorisations of $f$ and $g$ as 
        \begin{gather*}
            \frac{\sinh \big( \frac{\pi(a-z)}{2 y_0} \big)}{\sinh \big( \frac{\pi a}{2 y_0} \big)} \exp\bigg( \frac{\pi z}{2 y_0} \coth \Big( \frac{\pi a}{2 y_0} \Big) + \sum_{\ell = 2}^\infty \frac{z^\ell}{\ell} \sum_{k \in \bbZ} (\tau_{2y_0}^k a)^{-\ell} \bigg), \\
            \frac{\sinh \big( \frac{\pi(\overline a-z)}{2 y_0} \big)}{\sinh \big( \frac{\pi \overline a}{2 y_0} \big)} \exp\bigg( \frac{\pi z}{2 y_0} \coth \Big( \frac{\pi \overline a}{2 y_0} \Big) + \sum_{\ell = 2}^\infty \frac{z^\ell}{\ell} \sum_{k \in \bbZ} (\tau_{2y_0}^k \overline a)^{-\ell} \bigg).
        \end{gather*}
        We will use this insight in the following section.
    \end{enumerate}
\end{remark}

\section{Further results}

\subsection{Uniqueness for three parallel lines}

Consider three distinct parallel lines $\ell_1, \ell_2, \ell_3 \subset \mathbb{C}$ on which the magnitudes of two non-zero entire functions $f$ and $g$ agree. We denote the distance between $\ell_1$ and $\ell_2$ by $a > 0$ and the distance between $\ell_2$ and $\ell_3$ by $b > 0$. Assume, without loss of generality, that the three lines are parallel to the real numbers $\bbR$. Then, the zeroes of $f$ that are \emph{no} zeroes of $g$, described by the multiset $M_\mathrm{d}$, are invariant under the translations $\tau_{2a}$ and $\tau_{2b}$ by $2 \rmi a$ and $2 \rmi b$, respectively.

If $\tfrac{b}{a}$ is irrational, then $f$ and $g$ have the same zeroes: indeed, if we assume that there exists a $z \in \mathbb{C}$ such that $M_\mathrm{d}(z) > 0$, then all elements of $z + 2\mathrm{i}a\mathbb{Z} + 2\mathrm{i}b\mathbb{Z}$ are zeroes of $f$. Since $\tfrac{b}{a}$ is irrational, $z + 2\mathrm{i}a\mathbb{Z} + 2\mathrm{i}b\mathbb{Z}$ is dense in $z + \mathrm{i}\mathbb{R}$ by Kronecker's theorem. Therefore, $f = 0$: a contradiction.

We can now conclude that $f$ and $g$ agree up to global phase.

\begin{theorem}\label{thm:three_lines}
    Let $\ell_1, \ell_2, \ell_3 \subset \mathbb{C}$ be three distinct parallel lines. Let $a > 0$ denote the distance between $\ell_1$ and $\ell_2$ and let $b > 0$ denote the distance between $\ell_2$ and $\ell_3$. Assume that $\tfrac{b}{a}$ is irrational and let $f,g \in \calO(\bbC)$ be of finite order. Then, $\lvert f \rvert = \lvert g \rvert$ on $\ell_1 \cup \ell_2 \cup \ell_3$ if and only if $f \sim g$.
\end{theorem}

\begin{proof}
    Suppose that $\lvert f \rvert = \lvert g \rvert$ on $\ell_1 \cup \ell_2 \cup \ell_3$. As explained before the statement of the theorem, we have that $M_\mathrm{d} = 0$. Let us assume, without loss of generality, that one of the lines coincides with the real numbers $\mathbb{R}$. Then, Theorem~\ref{thm:zeroflipping} shows that $f = \mathrm{e}^{Q_f} P_\mathrm{c}$ and $g = \mathrm{e}^{Q_g} P_\mathrm{c}$, with polynomials $Q_f, Q_g \in \mathbb{C}[z]$ whose coefficients have the same real parts.

    Denote the imaginary parts of those coefficients by $(\lambda_k)_{k = 0}^d, (\mu_k)_{k = 0}^d \in \bbR$ and let $y_0 \in \mathbb{R} \setminus \{0\}$ be such that $\mathbb{R} + \mathrm{i}y_0$ is one of the parallel lines. Then, $\lvert f \rvert = \lvert g \rvert$ on $\mathbb{R} + \mathrm{i}y_0$ implies $\Re Q_f = \Re Q_g$ on $\mathbb{R} + \mathrm{i}y_0$. Expressed in terms of the imaginary parts of the coefficients, this is 
    \begin{equation*}
        \sum_{k = 0}^d ( \lambda_k - \mu_k ) \Im[ (x + \mathrm{i}y_0)^k ] = 0, \quad x \in \mathbb{R}.
    \end{equation*}
    We expand the power $ (x + \mathrm{i}y_0)^k $ and obtain
    \begin{equation*}
        \sum_{\ell = 0}^d x^\ell \sum_{k = \ell}^d \binom{k}{\ell} \sigma_{k-\ell} y_0^{k-\ell} ( \lambda_k - \mu_k ) = 0,
    \end{equation*}
    where 
    \begin{equation*}
        \sigma_k := \Im[\mathrm{i}^k] = \begin{cases}
            (-1)^\ell & \text{if } k = 2\ell+1, \\
            0 & \text{else}.
        \end{cases}
    \end{equation*}
    Comparing coefficients yields
    \begin{equation}\label{eq:comparing_coefficients}
        \sum_{k = \ell}^d \binom{k}{\ell} \sigma_{k-\ell} y_0^{k-\ell} ( \lambda_k - \mu_k ) = 0, \qquad \ell = 0,\dots,d.
    \end{equation}

    We finally show $ \lambda_k = \mu_k $ for all $ k = 1,\dots,d $ inductively, which proves the theorem. As base case, we show that $ \lambda_d = \mu_d $, which follows from equation~\eqref{eq:comparing_coefficients} with $ \ell = d-1 $. Now, suppose that $ \lambda_k = \mu_k $ for all $ k = j+1,\dots,d $, where $ j \in \{1,\dots,d-1\} $. Then, we consider $ \ell = j-1 $ in equation~\eqref{eq:comparing_coefficients} and obtain 
    \begin{align*}
        \MoveEqLeft[3] \sum_{k = j-1}^d \binom{k}{j-1} \sigma_{k-j+1} y_0^{k-j+1} ( \lambda_k - \mu_k ) \\
        ={}& j y_0 ( \lambda_{j} - \mu_{j} ) + \sum_{k = j+2}^d \binom{k}{j-1} \sigma_{k-j+1} y_0^{k-j+1} ( \lambda_k - \mu_k ) \\
        ={}& j y_0 ( \lambda_{j} - \mu_{j} ) = 0,
    \end{align*}
    such that $ \lambda_{j} = \mu_{j} $.
\end{proof}

\begin{remark}
    We note that the theorem extends to sets of uniqueness $ u_1 \subseteq \ell_1 $, $ u_2 \subseteq \ell_2 $, and $ u_3 \subseteq \ell_3 $ for the entire functions under consideration. We also note that the theorem remains true for entire functions that are \emph{not} of finite order. The argument is analogous to that presented in Remark~\ref{rem:reflection_principle}.
\end{remark}

\subsection{Characterisation of finite order entire functions with common magnitudes on equidistant parallel lines}

Our characterisation of finite order entire functions with common magnitudes on two parallel lines (see Theorem~\ref{thm:parallel}) naturally extends to a similar result on an arbitrary number of equidistant parallel lines. In the following, we present a simplified discussion and assume that $f$ and $g$, both in $\calO(\bbC)$, are of \emph{order less than two}. This is motivated by Gabor transform phase retrieval, which is connected to phase retrieval of entire functions with order less than two and type less than $\pi/2$ (see the explanation in Subsection~\ref{ssec:remark}).

We present a theorem characterising second order entire functions whose magnitudes agree on infinitely many equidistant parallel lines.

\begin{theorem}\label{thm:infinite}
    Let $y_0 > 0$ and let $f,g \in \calO(\bbC) \setminus \{0\}$ be of order less than two. Then, $\abs{f} = \abs{g}$ on $\bbR + \rmi y_0 \bbZ$ if and only if $f$ and $g$ factor as 
    \begin{gather*}
        f(z) = \rme^{Q_f(z)} P_\mathrm{c}(z) \prod_{\substack{a \in \operatorname{supp}(M_\text{d})\\\Im a \in (-y_0,y_0]}} \left[ \frac{\sinh \left( \frac{\pi (a-z)}{2 y_0} \right)}{\sinh\left( \frac{\pi a}{2 y_0} \right)} \rme^{R_a(z)} \right]^{M_\mathrm{d}(a)}, \\
        g(z) = \rme^{Q_g(z)} P_\mathrm{c}(z) \prod_{\substack{a \in \operatorname{supp}(M_\text{d})\\\Im a \in (-y_0,y_0]}} \left[ \frac{\sinh \left( \frac{\pi (\overline a-z)}{2 y_0} \right)}{\sinh\left( \frac{\pi \overline a}{2 y_0} \right)} \rme^{R_{\bar a}(z)} \right]^{M_\mathrm{d}(a)}.
    \end{gather*}
    Here, $Q_f, Q_g \in \bbC_2[z]$ are polynomials, whose coefficients have the same real parts and whose imaginary parts $(\lambda_j)_{j = 0}^2, (\mu_j)_{j = 0}^2 \in \bbR$ satisfy
    \begin{gather*}
        \mu_1 - \lambda_1 = \frac{\pi}{y_0} \sum_{\substack{a \in \supp(M_\mathrm{d}) \\ \Im a \in (-y_0,y_0]}} M_\mathrm{d}(a) \Im \coth \left( \frac{\pi a}{2 y_0} \right), \\
        \mu_2 - \lambda_2 = \frac{\pi^2}{4 y_0^2} \sum_{\substack{a \in \supp(M_\mathrm{d}) \\ \Im a \in (-y_0,y_0]}} M_\mathrm{d}(a) \Im \csch^2 \left( \frac{\pi a}{2 y_0} \right),
    \end{gather*}
    $P_\mathrm{c}$ is the canonical product formed with the common zeroes of $f$ and $g$, and
    \begin{equation*}
        R_a(z) := R_{a,y_0}(z) := \frac{\pi z}{2 y_0} \coth\left( \frac{\pi a}{2 y_0} \right) + \frac{\pi^2 z^2}{8 y_0^2} \csch^2 \left( \frac{\pi a}{2 y_0} \right).
    \end{equation*}
\end{theorem}

\begin{proof}
    By appropriately rescaling $f$ and $g$, we can assume that $y_0 = \tfrac{\pi}{2}$ without loss of generality. Let us thereby assume that $\lvert f \rvert = \lvert g \rvert$ on $\bbR + \tfrac{\pi \rmi}{2} \bbZ$. As in the proof of Theorem~\ref{thm:parallel}, we can use Theorem~\ref{thm:zeroflipping} to see that $f$ and $g$ factor as 
    \begin{gather*}
        f(z) = \rme^{Q_f(z)} P_\mathrm{c}(z) \prod_{\substack{a \in \operatorname{supp}(M_\text{d}) \\ \Im a \in (-\pi/2,\pi/2]}} \prod_{k \in \bbZ} E\left( \frac{z}{\tau_{\pi}^k a} ; 2 \right)^{M_\text{d}(a)}, \\
        g(z) = \rme^{Q_g(z)} P_\mathrm{c}(z) \prod_{\substack{a \in \operatorname{supp}(M_\text{d}) \\ \Im a \in (-\pi/2,\pi/2]}} \prod_{k \in \bbZ} E\left( \frac{z}{\tau_{\pi}^k \overline a} ; 2 \right)^{M_\text{d}(a)}.
    \end{gather*}
    Here, we may note that the genus of the canonical product formed with the zeroes of $f$ is at most two (see e.g.~\cite[Subsection~8.23 on p.~250]{titchmarsh1939theory}). Additionally, the polynomials $Q_f, Q_g \in \bbC_2[z]$ have degree at most two according to the Hadamard factorisation theorem. Remember also that the coefficients of $Q_f$ and $Q_g$ have the same real parts and that $P_\mathrm{c}$ denotes the canonical product formed with the common zeroes of $f$ and $g$.

    By Proposition~\ref{prop:expansion} and the partial fraction expansion of the second power of the cosecant (see e.g.~\cite[Example~(iv) on p.~113]{titchmarsh1939theory}), we can write 
    \begin{align*}
        \prod_{k \in \bbZ} E\left( \frac{z}{\tau_{\pi}^k a} ; 2 \right) &= \prod_{k \in \bbZ} E\left( \frac{z}{\tau_{\pi}^k a} ; 1 \right) \cdot \exp\left( \frac{z^2}{2} \sum_{k \in \bbZ} (\tau_{\pi}^k a)^{-2} \right) \\
        &= \frac{\sinh ( a-z )}{\sinh a} \exp( z \coth a ) \cdot \exp\left( - \frac{z^2}{2} \csc^2 (\rmi a) \right)
    \end{align*}
    for $a \in \bbC \setminus \pi \rmi \bbZ$. Therefore, 
    \begin{gather*}
        \prod_{\substack{a \in \operatorname{supp}(M_\text{d}) \\ \Im a \in (-\pi/2,\pi/2]}} \prod_{k \in \bbZ} E\left( \frac{z}{\tau_{\pi}^k a} ; 2 \right)^{M_\text{d}(a)} = \prod_{\substack{a \in \operatorname{supp}(M_\text{d}) \\ \Im a \in (-\pi/2,\pi/2]}} \left[ \frac{\sinh ( a-z )}{\sinh a} \rme^{R_a(z)} \right]^{M_\text{d}(a)}, \\
        \prod_{\substack{a \in \operatorname{supp}(M_\text{d}) \\ \Im a \in (-\pi/2,\pi/2]}} \prod_{k \in \bbZ} E\left( \frac{z}{\tau_{\pi}^k \overline a} ; 2 \right)^{M_\text{d}(a)} = \prod_{\substack{a \in \operatorname{supp}(M_\text{d}) \\ \Im a \in (-\pi/2,\pi/2]}} \left[ \frac{\sinh ( \overline a-z )}{\sinh \overline a} \rme^{R_{\bar a}(z)} \right]^{M_\text{d}(a)}.
    \end{gather*}

    It remains for us to consider the polynomials $Q_f$ and $Q_g$: let $x \in \bbR$ and compute
    \begin{multline*}
        \frac{\lvert f(x-\frac{\pi \rmi}{2}) \rvert}{\lvert g(x-\frac{\pi \rmi}{2}) \rvert} = \rme^{\Re[ Q_f(x - \frac{\pi \rmi}{2}) - Q_g(x - \frac{\pi \rmi}{2}) ]} \\
        \cdot \prod_{\substack{a \in \operatorname{supp}(M_\text{d}) \\ \Im a \in (-\pi/2,\pi/2]}} \left( \frac{\lvert \sinh \overline a \rvert}{\lvert \sinh a \rvert} \frac{\lvert \sinh (a-z + \frac{\pi \rmi}{2}) \rvert}{\lvert \sinh (\overline a - z + \frac{\pi \rmi}{2}) \rvert} \right)^{M_\mathrm{d}(a)} \\
        \cdot \rme^{M_\mathrm{d}(a) \Re[ R_a(x - \frac{\pi \rmi}{2}) - R_{\bar a}(x - \frac{\pi \rmi}{2}) ]}.
    \end{multline*}
    Since
    \begin{equation*}
        \sinh \overline a = \overline{ \sinh a }, \qquad \sinh(\overline a - x + \tfrac{\pi \rmi}{2}) = - \overline{\sinh(a-x+\tfrac{\pi \rmi}{2})},
    \end{equation*}
    and $\lvert f \rvert = \lvert g \rvert$ on $\bbR - \tfrac{\pi \rmi}{2}$, the above simplifies to 
    \begin{multline*}
        \Re\left[ Q_f\left(x - \frac{\pi \rmi}{2}\right) - Q_g\left(x - \frac{\pi \rmi}{2}\right) \right] \\
        = - \sum_{\substack{a \in \operatorname{supp}(M_\text{d}) \\ \Im a \in (-\pi/2,\pi/2]}} M_\mathrm{d}(a) \Re\left[ R_a\left(x - \frac{\pi \rmi}{2}\right) - R_{\bar a}\left(x - \frac{\pi \rmi}{2}\right) \right].
    \end{multline*}

    We may now use the following simple claim

    \begin{proof}[Claim.]\renewcommand{\qedsymbol}{}
        Let $p,q \in \bbC_2[z]$ be two polynomials whose coefficients have the same real parts. Denote the imaginary parts of their coefficients by $(a_j)_{j = 0}^2,(b_j)_{j = 0}^2 \in \bbR$. Then, 
        \begin{equation*}
            \Re[ p(x-\rmi y) - q(x-\rmi y) ] = 2(a_2 - b_2) xy - (a_1 - b_2) y, \qquad x,y \in \bbR.
        \end{equation*}
    \end{proof}

    It follows that
    \begin{multline*}
        \pi (\lambda_2 - \mu_2) x - \frac{\pi}{2} (\lambda_1 - \mu_2)  \\
        = - \pi \sum_{\substack{a \in \operatorname{supp}(M_\text{d}) \\ \Im a \in (-\pi/2,\pi/2]}} M_\mathrm{d}(a) ( x \Im \csch^2 a - \Im \coth a )
    \end{multline*}
    and the factorisation is complete. The reverse implication is shown in a similar way.
\end{proof}

\begin{example}
    The functions $f,g \in \calO(\bbC)$ defined in equation~\eqref{eq:counterexamples} are recovered with 
    \begin{equation*}
        Q_f(z) = z^2 + \rmi z,~Q_g(z) = z^2 - \rmi z,~P_\mathrm{c} = 1,~M_\mathrm{d} = \chi_{\pi \rmi \bbZ + \frac{\pi \rmi}{4}},~y_0 = \frac{\pi}{2},
    \end{equation*}
    where $\chi_{\pi \rmi \bbZ + \pi \rmi / 4}$ is the indicator function of the set $\pi \rmi \bbZ + \tfrac{\pi \rmi}{4}$. It follows directly from Theorem~\ref{thm:infinite} that $\lvert f \rvert = \lvert g \rvert$ on $\bbR + \tfrac{\pi \rmi}{2} \bbZ$. Additionally, it is readily seen that $f$ and $g$ have different sets of zeroes such that $f \not\sim g$.
\end{example}

\begin{remark}
    The periodicity of the zeroes of $f$ that are \emph{no} zeroes of $g$ is reminiscent of the work in \cite{groechenig2020phase}. Indeed, it turns out to be possible to show that $\abs{f} = \abs{g}$ on $\bbR + \rmi y_0 \bbZ$ if and only if $f$ and $g$ factor as 
    \begin{gather*}
        f(z) = r \rme^{\rmi \alpha} \rme^{Q(z)} P_\mathrm{c}(z) \prod_{a \in W_-} \left[ \frac{\rme^{\frac{\pi}{y_0}(a-z)}-1}{\rme^{\frac{\pi}{y_0}a}-1} \right]^{M_\mathrm{d}(a)} \prod_{a \in W_+} \left[ \frac{1 - \rme^{-\frac{\pi}{y_0}(a-z)}}{1 - \rme^{-\frac{\pi}{y_0}a}} \right]^{M_\mathrm{d}(a)}, \\
        g(z) = r \rme^{\rmi \beta} \rme^{Q(z)} P_\mathrm{c}(z) \prod_{a \in W_-} \left[ \frac{\rme^{\frac{\pi}{y_0}(\overline a-z)}-1}{\rme^{\frac{\pi}{y_0}\overline a}-1} \right]^{M_\mathrm{d}(a)} \prod_{a \in W_+} \left[ \frac{1 - \rme^{-\frac{\pi}{y_0}(\overline a-z)}}{1 - \rme^{-\frac{\pi}{y_0}\overline a}} \right]^{M_\mathrm{d}(a)}.
    \end{gather*}
    Here, $r > 0$, $\alpha,\beta \in \bbR$, $Q \in \bbC_2[z]$ is a polynomial with $Q(0) = 0$, $P_\mathrm{c}$ is the canonical product formed with the common zeroes of $f$ and $g$, and 
    \begin{gather*}
        W_+ := \set{ a \in [0,\infty) + \rmi (-y_0,y_0] }{ M_\mathrm{d}(a) > 0 }, \\
        W_- := \set{ a \in (-\infty,0) + \rmi (-y_0,y_0] }{ M_\mathrm{d}(a) > 0 }.
    \end{gather*}
    A proof of this is found in Appendix~\ref{app:infinite}.
\end{remark}

\subsection{Construction of universal counterexamples}

Given a set $\Omega \subset \bbC$ of infinitely many equidistant parallel lines, we have constructed entire functions $f,g \in \calO(\bbC)$ of order one which depend on $\Omega$, do \emph{not} agree up to global phase, and whose magnitudes agree on $\Omega$. We call such functions counterexamples to uniqueness for phase retrieval of entire functions; or, simply \emph{counterexamples}.

This raises the question whether $f$ can be constructed independently on $\Omega$ as well; i.e., whether we can form an entire function $f \in \calO(\bbC)$ such that, for all sets $\Omega$ of infinitely many equidistant parallel lines in some family $\calF$, we can find $g_\Omega \in \calO(\bbC)$ such that $f \not\sim g_\Omega$ while $\lvert f \rvert = \lvert g_\Omega \rvert$ on $\Omega$. We call the function $f$ \emph{universal counterexample} for $\calF$ if it exists.

For the family $\calF = \{ \bbR + \rmi a \bbZ \,|\, a > 0 \}$, \emph{no} universal counterexamples of finite order exist (if we require $g$ to be of finite order as well).

\begin{lemma}
    For all $f \in \calO(\bbC)$ of finite order, there exists a positive number $a > 0$, such that, for all $g \in \calO(\bbC)$ of finite order, $\lvert f \rvert = \lvert g \rvert$ on $\bbR + \rmi a \bbZ$ implies $f \sim g$. 
\end{lemma}

\begin{proof}
    Suppose by contradiction that the lemma is untrue at $f \in \calO(\bbC)$. Fix $a > 0$ and consider $g \in \calO(\bbC)$ of finite order such that $f \not\sim g$ while $\lvert f \rvert = \lvert g \rvert$ on $\bbR + \rmi a \bbZ$. Then, the multiset $M_\mathrm{d}$, describing the zeroes of $f$ that are \emph{no} zeroes of $g$, is invariant under the translation $\tau_{2a}$. At the same time, $M_\mathrm{d}$ is \emph{not} identically zero. Indeed, if it were, then Hadamard factorisation of $f$ and $g$ would yield $f = \rme^{Q_f} P$ and $g = \rme^{Q_g} P$, where $Q_f,Q_g \in \bbC[z]$ and $P$ is the canonical product formed with the zeroes of $f$. It then follows that $Q_f = Q_g + \rmi c$ for some $c \in \bbR$ as in the proof of Theorem~\ref{thm:three_lines}, which implies that $f \sim g$: a contradiction.

    Hence, we can find $z_a \in \bbC$ such that $M_\mathrm{d}(\tau_{2a}^k z_a) > 0$ for all $k \in \bbZ$. It follows that $\tau_{2 a}^\bbZ z_a$ is a subset of the zeroes of $f$. Since $a > 0$ is arbitrary, it follows that $f$ has uncountably many zeroes which implies that $f = 0$ and thus $g = 0$. We have reached a contradiction again and the lemma is proven.
\end{proof}

\begin{remark}
    The proof above does not use that $\calF = \{ \bbR + \rmi a \bbZ \,|\, a > 0 \}$ contains sets of parallel lines that are arbitrarily close. Instead only the uncountability of $\bbR_+$ is utilised. It follows that it is also true that, for all uncountable $A \subset \bbR_+$ and all $f \in \calO(\bbC)$ of finite order, there exists an $a \in A$, such that, for all $g \in \calO(\bbC)$ of finite order, $\lvert f \rvert = \lvert g \rvert$ on $\bbR + \rmi a \bbZ$ implies $f \sim g$.
\end{remark}

This raises the question whether the same remains true if we replace $A$ by a countable set containing arbitrarily small numbers such as $\bbN^{-1}$, for instance. We answer this question by considering a direct construction. 

\begin{theorem}
    Let $f : \bbN_0 \to (0,\infty)$ be such that $\lim_{n\to\infty} f(n) = \infty$ and define $a_n := \exp(\pi n f(n))$ for $n\in\bbN_0$. The functions 
    \begin{equation*}
        g_n(z) := \frac{a_n - \rmi \rme^{\pi n z}}{a_n - \rmi} \prod_{\substack{k \in \bbN\\k \neq n}} \frac{a_k + \rmi \rme^{\pi k z}}{a_k + \rmi}, \qquad n \in \bbN_0,
    \end{equation*}
    are well-defined entire functions and satisfy $\lvert g_0 \rvert = \lvert g_n \rvert$ on $\bbR + \tfrac{\rmi}{n} \bbZ$ for $n \in \bbN$. If $f$ is injective, then $g_0 \not\sim g_n$ for $n \in \bbN$, and, if $f = \exp$, then $g_n$ is of order one for $n \in \bbN_0$.
\end{theorem}

\begin{proof}
    Let $R > 0$ and let $K \in \bbN$ be such that $k \geq K$ implies $f(k) \geq R + \pi^{-1}$. Then, 
    \begin{equation*}
        \sum_{k \geq K} \abs{ \frac{a_k + \rmi \rme^{\pi k z}}{a_k + \rmi} - 1 } = \sum_{k \geq K} \frac{\lvert \rme^{\pi k z} - 1\rvert}{\lvert a_k + \rmi \rvert} \leq \sum_{k \geq K} \rme^{\pi k (\lvert z \rvert - f(k))} \leq \sum_{k \geq K} \rme^{-k} < \infty,
    \end{equation*}
    for all $\lvert z \rvert \leq R$, such that the infinite product converges locally uniformly to an entire function. 

    Next, let $n \in \bbN$, $m \in \bbZ$, and $x \in \bbR$. We can directly compute that $g_0(f(n) + \tfrac{\rmi}{2n}) = 0$ and 
    \begin{equation*}
        \frac{\lvert g_0(x+\frac{m \rmi}{n}) \rvert}{\lvert g_n(x+\frac{m \rmi}{n}) \rvert} = \frac{\abs{\frac{a_n + \rmi \rme^{\pi n (x + m/n \rmi)}}{a_n + \rmi}}}{\abs{\frac{a_n - \rmi \rme^{\pi n (x + m/n \rmi)}}{a_n - \rmi}}} = \frac{\abs{a_n + \rmi (-1)^m \rme^{\pi n x}}}{\abs{a_n - \rmi (-1)^m \rme^{\pi n x}}} = 1.
    \end{equation*}

    If we assume that $g_n(f(n) + \tfrac{\rmi}{2n}) = 0$, then there exists $k \in \bbN$, $k \neq n$, such that
    \begin{equation*}
        a_k + \rmi \rme^{\pi k (f(n) + \frac{\rmi}{2n})} = 0 \iff f(k) = f(n) \mbox{ and } \frac{k+n}{4n} \in \bbZ.
    \end{equation*}
    If $f$ is injective, the above is impossible and we can conclude that $g_0 \not\sim g_n$. 

    Finally suppose that $f = \exp$. We show that $g_0$ is of order one by splitting the product over $k \in \bbN$ into a finite and an infinite product:
    \begin{equation*}
        g_0(z) = \prod_{\substack{k \in \bbN\\k \leq \log (\lvert z \rvert + \pi^{-1})}} \frac{a_k + \rmi \rme^{\pi k z}}{a_k + \rmi} \cdot \prod_{\substack{k \in \bbN\\k > \log (\lvert z \rvert + \pi^{-1})}} \frac{a_k + \rmi \rme^{\pi k z}}{a_k + \rmi}.
    \end{equation*}
    For the finite product, we note that 
    \begin{equation*}
        \abs{\frac{a_k + \rmi \rme^{\pi k z}}{a_k + \rmi}} \leq a_k + \rme^{\pi k \lvert z \rvert} \leq (\lvert z \rvert + \pi^{-1} + 1) \exp( \pi \lvert z \rvert \log (\lvert z \rvert + \pi^{-1}) ).
    \end{equation*}
    For the infinite product, we rewrite 
    \begin{equation*}
        \frac{a_k + \rmi \rme^{\pi k z}}{a_k + \rmi} = 1 - \frac{\rmi(1 - \rme^{\pi k z})}{a_k + \rmi} =: 1 - \phi_k(z).
    \end{equation*}
    Note that $\lvert \phi_k(z) \rvert \leq \rme^{\pi k (\lvert z \rvert - \exp(k))} < \rme^{-k} < 1$ such that an application of the Taylor series for $\log(1-z)$ yields 
    \begin{equation*}
        \log \Bigg\lvert \prod_{\substack{k \in \bbN\\k > \log \lvert z \rvert}} \frac{a_k + \rmi \rme^{\pi k z}}{a_k + \rmi} \Bigg\rvert = - \sum_{\substack{k \in \bbN\\k > \log \lvert z \rvert}} \sum_{n \in \bbN} \frac{\Re[\phi_k(z)^n]}{n} \leq \sum_{k,n \in \bbN} \rme^{-nk} =:c  < \infty.
    \end{equation*}
    It follows that 
    \begin{equation*}
        \lvert g_0(z) \rvert \leq (\lvert z \rvert + \pi^{-1} + 1)^{\log (\lvert z \rvert + \pi^{-1})} \exp( \pi \lvert z \rvert \log^2 (\lvert z \rvert + \pi^{-1}) ) \cdot \rme^c 
    \end{equation*}
    such that $g_0$ is of first order. Therefore, the functions $(g_n)_{n \in \bbN}$ are of first order as well.
\end{proof}

\begin{remark}
    The function $g_0$ was initially constructed by choosing the zeroes $z_{n,k} := \rme^n + \tfrac{\rmi}{2n} + \tfrac{2\rmi k}{n}$, for $n \in \bbN$, $k \in \bbZ$, and setting up the canonical product 
    \begin{equation*}
        \prod_{n \in \bbN} \prod_{k \in \bbZ} E\left( \frac{z}{z_{n,k}};1 \right).
    \end{equation*}
    By Proposition~\ref{prop:expansion}, the above coincides with 
    \begin{equation*}
        \prod_{n \in \bbN} \frac{\sinh\left( \frac{\pi n}{2} (\rme^n + \frac{\rmi}{2n} - z) \right)}{\sinh\left( \frac{\pi n}{2} (\rme^n + \frac{\rmi}{2n}) \right)} \rme^{\frac{\pi n}{2} \coth\left( \frac{\pi n}{2} (\rme^n + \frac{\rmi}{2n}) \right) z}.
    \end{equation*}
    In exponential form, this is 
    \begin{equation*}
        \prod_{n \in \bbN} \frac{a_n \rme^{-\pi n z} + \rmi}{a_n + \rmi} \exp\left( \frac{\pi n a_n z}{a_n + \rmi} \right),
    \end{equation*}
    where we borrow the notation $a_n = \exp(\pi n \rme^n)$ from the theorem. Multiplying $(a_n \rme^{-\pi n z} + \rmi)/(a_n + \rmi)$ by $\rme^{\pi n z}$ yields fractions whose product converges to an entire function as shown in the proof above.
\end{remark}

\subsection{A remark on Gabor transform phase retrieval}\label{ssec:remark}

\subsubsection*{Introducing the Gabor and Bargmann transform}

Gabor transform phase retrieval and phase retrieval of entire functions are related by the connection of the Gabor and Bargmann transforms. Specifically, the \emph{Gabor transform} of a square-integrable function $f \in L^2(\bbR)$ is 
\[
    \calG f (x,\omega) := 2^{1/4} \int_\bbR f(t) \rme^{-\pi(t-x)^2} \rme^{-2\pi\rmi t \omega} \dd t, \qquad (x,\omega) \in \bbR^2,
\]
while the \emph{Bargmann transform} is 
\[
    \calB f (z) := 2^{1/4} \int_\bbR f(t) \rme^{2\pi t z - \pi t^2 - \frac{\pi}{2} z^2} \dd t, \qquad z \in \bbC.
\]

The Bargmann transform is a unitary map from $L^2(\bbR)$ onto the Fock space $\calF^2(\bbC)$ of entire functions which are square-integrable with respect to the Gaussian probability measure \cite[Theorem~3.4.3 on p.~56]{groechenig2001foundations}, given by 
\begin{equation*}
    \gamma(A) := \int_A \rme^{-\pi\lvert z \rvert^2} \dd z, \qquad A \subset \bbC.
\end{equation*}

Interestingly, the Fock space is a reproducing kernel Hilbert space with the property \cite[Theorem~3.4.2 on p.~54]{groechenig2001foundations}
\begin{equation*}
    \abs{F(z)} \leq \norm{F}_{L^2(\bbC,\gamma)} \rme^{\pi \abs{z}^2 / 2}, \qquad z \in \bbC,
\end{equation*}
from which we can immediately conclude that $F \in \calF^2(\bbC)$ is of order less than two and type less than $\tfrac{\pi}{2}$. Additionally, it is clear that any entire function of order strictly less than two (or of order two and type strictly less than $\tfrac{\pi}{2}$) is in the Fock space.

Finally, the Gabor and Bargmann transforms satisfy \cite[Proposition~3.4.1 on p.~54]{groechenig2001foundations}
\begin{equation*}
    \calG f (x,-\omega) = \rme^{\pi \rmi x \omega} \calB f (z) \rme^{-\pi \abs{z}^2/2}, \qquad z = x+\rmi \omega \in \bbC.
\end{equation*}
This relates Gabor transform phase retrieval --- which is the recovery of square-integrable signals $f$ from partial knowledge of the magnitudes of their Gabor transforms $\lvert \calG f \rvert$ on $\Omega \subseteq \bbR^2$ --- to the phase retrieval of entire functions of order at most two and type at most $\tfrac{\pi}{2}$. 

\subsubsection*{Three results on Gabor transform phase retrieval}

It follows that all the results that we have proven so far continue to hold for Gabor phase retrieval after suitable modifications. In the following, we illustrate this with three interesting examples. 

First, we see that Theorem~\ref{thm:three_lines} implies the following counterpart.

\begin{theorem}
    Let $\ell_1,\ell_2,\ell_3 \subset \bbR^2$ be three parallel lines. Let $a > 0$ denote the distance between $\ell_1$ and $\ell_2$ and let $b > 0$ denote the distance between $\ell_2$ and $\ell_3$. Assume that $\tfrac{b}{a}$ is irrational and let $f,g \in L^2(\bbR)$. Then, $\lvert \calG f \rvert = \lvert \calG g \rvert$ holds on $\ell_1 \cup \ell_2 \cup \ell_3$ if and only if $f \sim g$.
\end{theorem}

Secondly, note that the entire functions defined in equation~\eqref{eq:counterexamples} are of first order and thus in the Fock space $\calF^2(\bbC)$. Taking the inverse Bargmann transform yields 
\begin{gather*}
    f(t) = 2^{1/4} \rme^{-\frac{1}{2 \pi} - \pi t^2} \left( \cosh(2 t) + \rmi \sinh (2t) \right), \\
    g(z) = 2^{1/4} \rme^{-\frac{1}{2 \pi} - \pi t^2} \left( \cosh(2 t) - \rmi \sinh (2t) \right).
\end{gather*}
It follows from the relation of the Gabor and the Bargmann transform that $\lvert \calG f \rvert = \lvert \calG g \rvert$ on $\bbR \times \tfrac{\pi}{2} \bbZ$ while $f \not\sim g$. The functions above inspired the paper \cite{alaifari2021phase}.

Thirdly, there exist universal counterexamples for Gabor phase retrieval.

\begin{theorem}
    There exists a sequence $(f_n)_{n \in \bbN_0} \in L^2(\bbR)$ such that $f_0 \not\sim f_n$ and $\lvert \calG f_0 \rvert = \lvert \calG f_n \rvert$ on $\bbR \times \tfrac{\bbZ}{n}$ for all $n \in \bbN$. 
\end{theorem}

\subsubsection*{A bound on the number of zeroes in certain disks for functions in the Fock space}

In some applications of the results presented so far, one might want to construct a square-integrable function, whose Gabor transform has a specified set of zeroes $Z \subset \bbR^2$. This boils down to the construction of an entire function in the Fock space with the related set of zeroes $\{ z = x + \rmi y \in \bbC \,|\, (x,-y) \in Z \}$. In order for the latter to be achievable, we need to make sure that $Z$ is sufficiently spread out. 

Let us illustrate this with an interesting example: suppose we are given $a,b > 0$ and want to construct a non-zero function $f \in L^2(\bbR)$ such that $\calG f = 0$ on $a \bbZ \times b \bbZ$. A natural way of approaching this problem is to define 
\begin{equation}\label{eq:construction}
    F(z) = z \prod_{(m,n) \in \bbZ^2 \setminus \{(0,0)\}} E\left( \frac{z}{a m + \rmi b n} ; 2 \right), \qquad z \in \bbC,
\end{equation}
which is a well-defined entire function of order two according to \cite[Subection~8.25 on pp.~251--252]{titchmarsh1939theory} since
\begin{equation*}
    \sum_{m,n \geq 2} \frac{1}{(a^2 m^2 + b^2 n^2)^{1+\epsilon}} \leq \int_1^\infty \int_1^\infty \frac{1}{(a^2 x^2 + b^2 y^2)^{1+\epsilon}} \dd x \dd y < \infty,
\end{equation*}
for $\epsilon > 0$.

If we could show that the type of $F$ is strictly smaller than $\tfrac{\pi}{2}$, then $F \in \calF^2(\bbC)$ and we could write $f = \calB^{-1} F \in L^2(\bbR)$. Upper bounding the type of a product of primary factors is generally hard, however, and we will instead present a bound on the number of zeroes in disks centered at the origin for functions in the Fock space. This bound can be used to give a rough idea about how densely one can choose the set of zeroes of $F \in \calF^2(\bbC)$. 

\begin{lemma}
    \label{lem:roughidea}
    Let $f \in L^2(\bbR)$ be non-zero. Let $n : (0,\infty) \to \bbN_0$ be such that $n(r)$ denotes the number of zeroes of $\calB f$ (counted with multiplicity) in the disc of radius $r > 0$ centered at the origin. Let
    $m \in \bbN_0$ denote the multiplicity of the zero at the origin of $\calB f$ and let $c := 2 \log \lVert f \rVert_2 - 2 \log \lvert \lim_{z \to 0} z^{-m} \calB f(z) \rvert - m$.
    Then,
    \[
        n(r) \leq \pi \rme r^2 - 2m \log r + c, \qquad r > 0.
    \]
\end{lemma}

\begin{proof}
    Let $R \geq r > 0$ be such that $R^2 = \rme r^2$ and define 
    \[
        F(z) := \frac{\calB f(z)}{z^m}, \qquad z \in \bbC.
    \]
    Denote by $n_F(t)$ the number of zeroes of $F$ (counted with multiplicity) in the disc of radius $t  > 0$ centered at the origin. Note that $F$ is an entire function with removable singularity at $z = 0$. Therefore, Jensen's formula implies 
    \[
        \int_0^{R} \frac{n_F(t)}{t} \dd t = \frac{1}{2 \pi} \int_0^{2\pi} \log \abs{F(R \rme^{\rmi \theta})} \dd \theta - \log \abs{\lim_{z \to 0} F(z)}.
    \]
    We can use the reproducing property of $\calF^2(\bbC)$ along with the fact that $\calB : L^2(\bbR) \to \calF^2(\bbC)$ is an isometry to see that 
    \[
        \abs{F(R \rme^{\rmi \theta})} =  \frac{\abs{\calB f (R \rme^{\rmi \theta})}}{R^m}
        \leq \frac{\rme^{\frac{\pi}{2} R^2}}{R^m} \norm{\calB f}_{L^2(\bbC,\gamma)}
        = \frac{\rme^{\frac{\pi \rme}{2} r^2}}{\rme^{\frac{m}{2}} r^m} \lVert f \rVert_2.
    \]
    In addition, we can use that $n_F(t)$ is increasing in $t$ to estimate
    \[
        \int_r^R \frac{n_F(t)}{t} \dd t \geq n_F(r) \int_r^R \frac{1}{t} \dd t = \frac{n_F(r)}{2}
    \]
    and thus 
    \[
        n_F(r) \leq 2 \int_r^R \frac{n_F(t)}{t} \dd t \leq 2 \int_0^R \frac{n_F(t)}{t} \dd t.
    \]
    We conclude that 
    \[
        n_F(r) \leq \pi \rme r^2 - 2 m \log r + 2 \log \lVert f \rVert_2 - 2 \log \left\lvert \lim_{z \to 0} \frac{\calB f (z)}{z^m} \right\rvert - m.
    \]
\end{proof}

We apply Lemma~\ref{lem:roughidea} in our example: let us use the estimate \cite[Theorem~I on pp.~15--16]{levitan1987asymptotic}
\[
    n(r) = \abs{ (a \bbZ + \rmi b \bbZ) \cap \set{ z \in \bbC }{ \abs{z} \leq r } } = \frac{\pi r^2}{ab} + \calO(r^{2/3}), \qquad r \to \infty.
\]
A comparison with Lemma~\ref{lem:roughidea} yields that $ab \geq \rme^{-1}$ is a necessary condition for $F \in \calF^2(\bbC)$, where $F$ is defined in equation~\eqref{eq:construction}. We have therefore proven the following sampling result for the Gabor transform.

\begin{corollary}\label{cor:sampling}
    Let $a,b > 0$ such that $ab < \rme^{-1}$ and let $f,g \in L^2(\bbR)$. If
    \[
        \calG f = \calG g \qquad \mbox{on } a \bbZ \times b \bbZ,
    \]
    then $f = g$.
\end{corollary}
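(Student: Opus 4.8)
The plan is to reduce this linear sampling statement to the root-counting estimate of Lemma~\ref{lem:roughidea}. First I would set $h := f - g \in L^2(\bbR)$ and use the linearity of the Gabor transform to obtain $\calG h = \calG f - \calG g = 0$ on $a\bbZ \times b\bbZ$. It then suffices to show that $h = 0$, so I would argue by contradiction and suppose $h \neq 0$. Since the (rescaled) Bargmann transform is a unitary bijection from $L^2(\bbR)$ onto the Fock space $\calF^2(\bbC)$, and is in particular injective, the function $F := \calB h$ is a non-zero entire function in $\calF^2(\bbC)$, hence of order two.

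Next I would transfer the vanishing of $\calG h$ to $F = \calB h$ using the relation~\eqref{eq:GaborandBargmann}, namely $\calG h(x,-\omega) = \rme^{\pi\rmi x\omega} F(z)\rme^{-\pi\abs{z}^2/2}$ for $z = x + \rmi\omega$. Since the exponential prefactors never vanish, $\calG h$ vanishes at $(x,-\omega)$ exactly when $F$ vanishes at $x + \rmi\omega$. Reading off the sample points $(aj, b\ell) \in a\bbZ \times b\bbZ$ (with $\omega = -b\ell$) shows that $F$ vanishes at every point $aj - \rmi b\ell$, that is, on the entire lattice $a\bbZ + \rmi b\bbZ$. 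Consequently, if $n(r)$ denotes the number of roots of $F$ in the disc $\{\abs{z}\le r\}$ counted with multiplicity, then $n(r)$ is bounded below by the number of lattice points in that disc.

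Finally I would compare two asymptotics. For the lower bound, the lattice-point count gives $n(r) \ge \abs{(a\bbZ+\rmi b\bbZ)\cap\{\abs{z}\le r\}} = \frac{\pi r^2}{ab} + \calO(r^{2/3})$ as $r\to\infty$, exactly the Levitan estimate used in the example preceding the corollary. For the upper bound, Lemma~\ref{lem:roughidea} applied to $h$ yields $n(r) \le \pi\rme r^2 - 2k\log r + c$, where $k$ is the multiplicity of the root of $F$ at the origin. Combining these, dividing by $r^2$ and letting $r\to\infty$ forces $\frac{\pi}{ab} \le \pi\rme$, i.e.\ $ab \ge \rme^{-1}$, which contradicts the hypothesis $ab < \rme^{-1}$. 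Hence $h = 0$ and $f = g$.

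The main obstacle is conceptual rather than technical: one must recognise that equality of the Gabor transforms themselves (as opposed to their magnitudes) makes the problem \emph{linear}, so the heavy phaseless machinery of the earlier sections is unnecessary and everything collapses to a density comparison between the sampling lattice and the natural root density $\pi\rme$ of Fock-space functions dictated by Jensen's formula. The only quantitative care needed is to track the constants faithfully, so that the lattice density $1/(ab)$ is compared against $\rme$; the error term $\calO(r^{2/3})$ and the $-2k\log r + c$ terms are of lower order and drop out in the limit $r\to\infty$.
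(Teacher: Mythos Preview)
Your proof is correct and follows essentially the same route as the paper's own argument: set $h=f-g$, pass to $\calB h$ via~\eqref{eq:GaborandBargmann}, observe that $\calB h$ vanishes on the lattice $a\bbZ+\rmi b\bbZ$, and then compare the Levitan lattice-point count with the Jensen-type upper bound of Lemma~\ref{lem:roughidea} to force $ab\ge\rme^{-1}$. The paper's proof is terser, simply citing ``the prior lemma and the considerations before this corollary'' for the density comparison you spell out explicitly.
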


\begin{remark}
    The assumption $ab < \rme^{-1}$ is known to be suboptimal. In fact, Corollary~\ref{cor:sampling} holds under the weaker assumption $ab \leq 1$ according to \cite{lyubarskii1992frames,seip1992density}.

    Finally, we note that we can directly estimate the type of $F$ defined in equation~\eqref{eq:construction}. Indeed, $F$ is a special case of the Weierstrass $\sigma$-function and one may use its quasi-periodicity to show that its type $\sigma$ satisfies $\sigma \leq \tfrac{\pi}{2ab}$ \cite{groechenig2009gabor,hayman1974local}. Hence, $F \in \calF^2(\bbC)$ and it follow that we can construct a non-zero function $f \in L^2(\bbR)$ such that $\calG f = 0$ on $a \bbZ \times b \bbZ$ if $ab > 1$.
\end{remark}

\appendix

\section{Expansion of a function as an infinite product}

The following expansion is used in the proof of Theorem~\ref{thm:infinite}. 

\begin{proposition}\label{prop:expansion}
    For all $a,z \in \bbC$ with $a \not\in \pi \rmi \bbZ$, the following product formula holds:
    \begin{equation*}
        \frac{\sinh ( a-z )}{\sinh a} \exp( z \coth a ) = \prod_{k \in \bbZ} \left(1 - \frac{z}{a + \pi \rmi k} \right) \exp\left( \frac{z}{a + \pi \rmi k} \right).
    \end{equation*}
\end{proposition}

\begin{proof} 
    We want to rearrange the infinite product. To justify this, we show that the product converges absolutely. Let us fix $a,z \in \bbC$ with $a \not\in \pi \rmi \bbZ$ and introduce the notation $u_k := z/(a + \pi \rmi k)$ for $k \in \bbZ$. If $u_k$ satisfies $\lvert u_k \rvert \leq \tfrac12$, then 
    \begin{equation*}
        \lvert s_k \rvert := \Bigg\lvert \sum_{\ell = 2}^\infty \frac{u_k^\ell}{\ell} \Bigg\rvert \leq \frac12 \sum_{\ell = 2}^\infty \abs{u_k}^\ell = \frac{\lvert u_k \rvert^2}{2 - 2 \lvert u_k \rvert} \leq \lvert u_k \rvert^2 \leq \frac14.
    \end{equation*}
    If $\lvert k \rvert$ is sufficently large, then $\lvert u_k \rvert \leq \tfrac12$ as well as $\pi \lvert k \rvert > \lvert a \rvert$. Summing over all such $k$ yields
    \begin{align*}
        \sum_k \lvert 1 - E(u_k;1) \rvert &= \sum_k \lvert 1 - \exp( - s_k ) \rvert \leq \sum_k ( \exp \lvert s_k \rvert - 1 ) \leq 2 \sum_k \lvert s_k \rvert \\
        &\leq 2 \sum_k \lvert u_k \rvert^2 \leq 2 \lvert z \rvert^2 \sum_k (\pi \lvert k \rvert - \lvert a \rvert)^{-2} < \infty
    \end{align*}
    by the integral test.

    We rearrange the infinite product
    \begin{equation*}
        \prod_{k \in \bbZ} E(u_k; 1) = E(u_0;1) \prod_{n \in \bbN} E(u_n;1) E(u_{-n};1).
    \end{equation*}
    It is useful for us to note that 
    \begin{equation*}
        1 - u_n = \frac{1 + \frac{a-z}{\pi \rmi n}}{1 + \frac{a}{\pi \rmi n}}, \qquad (1-u_n)(1-u_{-n}) = \frac{1 + \frac{(a-z)^2}{\pi^2 n^2}}{1 + \frac{a^2}{\pi^2 n^2}},
    \end{equation*}
    which allows us to write our infinite product as 
    \begin{equation*}
        \frac{(a-z) \prod_{n \in \bbN} \big( 1 + \frac{(a-z)^2}{\pi^2 n^2} \big)}{a \prod_{n \in \bbN} \big(1 + \frac{a^2}{\pi^2 n^2}\big)} \exp\left( \frac{z}{a} + 2 a z \sum_{n \in \bbN} \frac{1}{a^2 + \pi^2 n^2}\right).
    \end{equation*}
    By the well-known expansion of the sine as an infinite product (cf.~e.g.~\cite[Section~3.23 on pp.~113--114]{titchmarsh1939theory}) and the partial fraction expansion of the cotangent (cf.~e.g.~\cite[Example~(i) on p.~113]{titchmarsh1939theory}), the above is equal to 
    \begin{equation*}
        \frac{\sin(\rmi (a-z))}{\sin(\rmi a)} \exp(z \rmi \cot(\rmi a)),
    \end{equation*}
    which proves the proposition.
\end{proof}

\section{Characterisation of finite order entire functions with common magnitudes on equidistant parallel lines}\label{app:infinite}

We prove the following characterisation.

\begin{theorem}
    Let $y_0 > 0$, and let $f,g \in \calO(\bbC) \setminus \{0\}$ be of order less than two. Then, $\abs{f} = \abs{g}$ on $\bbR + \rmi y_0 \bbZ$ if and only if $f$ and $g$ factor as 
    \begin{gather*}
        f(z) = r \rme^{\rmi \alpha} \rme^{Q(z)} P_\mathrm{c}(z) \prod_{a \in W_-} \left[ \frac{\rme^{\frac{\pi}{y_0}(a-z)}-1}{\rme^{\frac{\pi}{y_0}a}-1} \right]^{M_\mathrm{d}(a)} \prod_{a \in W_+} \left[ \frac{1 - \rme^{-\frac{\pi}{y_0}(a-z)}}{1 - \rme^{-\frac{\pi}{y_0}a}} \right]^{M_\mathrm{d}(a)}, \\
        g(z) = r \rme^{\rmi \beta} \rme^{Q(z)} P_\mathrm{c}(z) \prod_{a \in W_-} \left[ \frac{\rme^{\frac{\pi}{y_0}(\overline a-z)}-1}{\rme^{\frac{\pi}{y_0}\overline a}-1} \right]^{M_\mathrm{d}(a)} \prod_{a \in W_+} \left[ \frac{1 - \rme^{-\frac{\pi}{y_0}(\overline a-z)}}{1 - \rme^{-\frac{\pi}{y_0}\overline a}} \right]^{M_\mathrm{d}(a)}.
    \end{gather*}
    Here, $r > 0$, $\alpha,\beta \in \bbR$, $Q \in \bbC_2[z]$ is a polynomial with $Q(0) = 0$, $P_\mathrm{c}$ is the canonical product formed with the common zeroes of $f$ and $g$, and 
    \begin{gather*}
        W_+ := \set{ a \in [0,\infty) + \rmi (-y_0,y_0] }{ M_\mathrm{d}(a) > 0 }, \\
        W_- := \set{ a \in (-\infty,0) + \rmi (-y_0,y_0] }{ M_\mathrm{d}(a) > 0 }.
    \end{gather*}
\end{theorem}

\begin{proof}
    As in the proof of Theorem~\ref{thm:infinite}, we can assume that $y_0 = \tfrac{\pi}{2}$ without loss of generality. Suppose that $\lvert f \rvert = \lvert g \rvert$ on $\bbR + \tfrac{\pi \rmi}{2} \bbZ$. Then, Theorem~\ref{thm:infinite} implies that $f$ and $g$ factor as 
    \begin{gather*}
        f(z) = \rme^{Q_f(z)} P_\mathrm{c}(z) \prod_{a \in W_- \cup W_+} \left[ \frac{\sinh (a-z)}{\sinh a} \rme^{R_a(z)} \right]^{M_\mathrm{d}(a)}, \\
        g(z) = \rme^{Q_g(z)} P_\mathrm{c}(z) \prod_{a \in W_- \cup W_+} \left[ \frac{\sinh (\overline a - z)}{\sinh a} \rme^{R_{\bar a}(z)} \right]^{M_\mathrm{d}(a)}.
    \end{gather*}
    Here, $Q_f, Q_g \in \bbC_2[z]$ are polynomials, $P_\mathrm{c}$ is the canonical product formed with the common zeroes of $f$ and $g$, and $R_a(z) = z \coth a + z^2/2 \csch^2 a$.

    We want to rearrange the infinite products over $W_-$ and $W_+$ which we will justify by their absolute convergence (just as in the proof of Proposition~\ref{prop:expansion}). Let us note that almost all $a \in W_- \cup W_+$ satisfy $\lvert \Re a \rvert \geq 2\lvert z \rvert$ and focus on those that do in the following. We introduce the notation $u_{a,k} := z/(a + \pi \rmi k)$ and find that $\lvert u_{a,k} \rvert \leq \tfrac12$. Applying the well-known Taylor series expansion of $\log(1-z)$, it follows that
    \begin{equation*}
        \log \left( \frac{\sinh (a-z)}{\sinh a} \rme^{R_a(z)} \right) = \sum_{k \in \bbZ} \log E(u_{a,k};2) = - \sum_{k \in \bbZ} \sum_{\ell = 3}^\infty \frac{u_{a,k}^\ell}{\ell} =: s_a,
    \end{equation*}
    and that 
    \begin{equation*}
        \lvert s_a \rvert \leq \frac13 \sum_{k \in \bbZ} \sum_{\ell = 3}^\infty \lvert u_{a,k} \rvert^\ell \leq \frac23 \sum_{k \in \bbZ} \lvert u_{a,k} \rvert^3.
    \end{equation*}
    Therefore, 
    \begin{align*}
        \sum_{\lvert \Re a \rvert \geq 2 \lvert z \rvert} \left\lvert \frac{\sinh (a-z)}{\sinh a} \rme^{R_a(z)} - 1 \right\rvert &= \sum_{\lvert \Re a \rvert \geq 2 \lvert z \rvert} \lvert \rme^{s_a} - 1 \rvert \leq \sum_{\lvert \Re a \rvert \geq 2 \lvert z \rvert} ( \rme^{\lvert s_a \rvert} - 1 ) \\
        &\leq 2 \sum_{\lvert \Re a \rvert \geq 2 \lvert z \rvert} \lvert s_a \rvert \leq \frac43 \sum_{\lvert \Re a \rvert \geq 2 \lvert z \rvert} \sum_{k \in \bbZ} \lvert u_{a,k} \rvert^3 \\
        &= \frac{4 \lvert z \rvert}{3} \sum_{\lvert \Re a \rvert \geq 2 \lvert z \rvert} \sum_{k \in \bbZ} \lvert a + \pi \rmi k \rvert^{-3} < \infty,
    \end{align*}
    where we use that the exponent of convergence of the zeroes of $f$ is smaller than its order (cf.~e.g.~\cite[Section~8.22 on pp.~249--250]{titchmarsh1939theory}) and thus smaller than two.
    
    We have shown that the products over $W_- \cup W_+$ converge absolutely, which implies that they converge unconditionally. We can thus rearrange them to 
    \begin{multline*}
        \prod_{a \in W_-} \left[ \frac{\sinh (a-z)}{\sinh a} \rme^{R_a(z)} \right]^{M_\mathrm{d}(a)} \prod_{a \in W_+} \left[ \frac{\sinh (a-z)}{\sinh a} \rme^{R_a(z)} \right]^{M_\mathrm{d}(a)} \\
        = \prod_{a \in W_-} \left[ \frac{\rme^{2(a-z)}-1}{\rme^{2a}-1} \rme^{z(\coth a + 1) + \frac{z^2}{2} \csch^2 a} \right]^{M_\mathrm{d}(a)} \\
        \cdot \prod_{a \in W_+} \left[ \frac{1 - \rme^{-2(a-z)}}{1 - \rme^{-2a}} \rme^{z(\coth a - 1) + \frac{z^2}{2} \csch^2 a} \right]^{M_\mathrm{d}(a)}.
    \end{multline*}
    As in \cite[Section~3 on pp.~11--14]{groechenig2020phase}, we can show that the above simplifies to 
    \begin{equation*}
        \rme^{R_f(z)} \prod_{a \in W_-} \left[ \frac{\rme^{2(a-z)}-1}{\rme^{2a}-1} \right]^{M_\mathrm{d}(a)} \prod_{a \in W_+} \left[ \frac{1 - \rme^{-2(a-z)}}{1 - \rme^{-2a}} \right]^{M_\mathrm{d}(a)},
    \end{equation*}
    where $R_f \in \bbC_2[z]$, because all of the sums
    \begin{gather*}
        \sum_{a \in W_-} \abs{\frac{\rme^{2(a-z)}-1}{\rme^{2a}-1} - 1}, \qquad \sum_{a \in W_+} \abs{\frac{1 - \rme^{-2(a-z)}}{1 - \rme^{-2a}} - 1}, \\
        \sum_{a \in W_-} \abs{\coth a + 1}, \qquad \sum_{a \in W_+} \abs{\coth a - 1}, \qquad \sum_{a \in W_- \cup W_+} \lvert \csch a \rvert^2
    \end{gather*}
    converge. We conclude that 
    \begin{gather*}
        f(z) = \rme^{Q_f(z)+R_f(z)} P_\mathrm{c}(z) \prod_{a \in W_-} \left[ \frac{\rme^{2(a-z)}-1}{\rme^{2a}-1} \right]^{M_\mathrm{d}(a)} \prod_{a \in W_+} \left[ \frac{1 - \rme^{-2(a-z)}}{1 - \rme^{-2a}} \right]^{M_\mathrm{d}(a)}, \\
        g(z) = \rme^{Q_g(z)+R_g(z)} P_\mathrm{c}(z) \prod_{a \in W_-} \left[ \frac{\rme^{2(\overline a-z)}-1}{\rme^{2\overline a}-1} \right]^{M_\mathrm{d}(a)} \prod_{a \in W_+} \left[ \frac{1 - \rme^{-2(\overline a-z)}}{1 - \rme^{-2\overline a}} \right]^{M_\mathrm{d}(a)}.
    \end{gather*}

    It remains for us to consider the polynomials $p_f := Q_f+R_f, p_g := Q_g+R_g \in \bbC_2[z]$: since $\lvert f \rvert = \lvert g \rvert$ on $\bbR$, we have $\Re p_f = \Re p_g$ on $\bbR$, which implies that the coefficients of $p_f$ and $p_g$ have the same real parts. Finally, $\lvert f \rvert = \lvert g \rvert$ on $\bbR + \tfrac{\pi \rmi}{2}$ implies that $p_f = p_g + \rmi c$ for $c \in \bbR$ as in the proof of Theorem~\ref{thm:three_lines}. Therefore, the factorisation has been proven. The reverse implication follows by a direct computation.
\end{proof}

\paragraph{Acknowledgements} I want to thank Rima Alaifari and Giovanni S.~Alberti for fruitful discussions and I want to thank the reviewer for their helpful inputs which have greatly improved the presentation of this paper. I would also like to acknowledge funding through the SNSF Grant 200021\_184698 and that I used ChatGPT 3.5 during the revision process to enhance the readability and language of certain sentences. I confirm that I have diligently reviewed and edited the output, recognising my responsibility and accountability for the contents of this paper.

Finally, I would like to acknowledge that Theorem~\ref{thm:three_lines} has been suggested to me twice: first, by Philippe Jaming in the summer of $2022$ and, secondly, by the reviewer during the review process. I want to thank them for this; all credit for Theorem~\ref{thm:three_lines} should go to them. I was also informed that Theorem~\ref{thm:three_lines} follows from a result in \cite{liehr2023arithmetic}.

\paragraph{Declarations of interest} None.

\bibliography{sources}
\bibliographystyle{plain}

\end{document}